\theoremstyle{plain}
\newtheorem{theorem}{Theorem}[section]
\newtheorem{conjecture}[theorem]{Conjecture}
\newtheorem{lemma}[theorem]{Lemma}
\newtheorem{proposition}[theorem]{Proposition}
\newtheorem{corollary}[theorem]{Corollary}
\theoremstyle{definition}
\theoremstyle{remark}
\newtheorem{remark}[theorem]{Remark}
\numberwithin{equation}{section}
\newcommand{\longto}{\longrightarrow}
\newcommand{\CC}{\mathbb{C}}
\newcommand{\ZZ}{\mathbb{Z}}
\newcommand{\FF}{\mathbb{F}}
\newcommand{\QQ}{\mathbb{Q}}
\newcommand{\NN}{\mathbb{N}}
\newcommand{\cB}{\mathcal{B}}
\newcommand{\cH}{\mathcal{H}}
\renewcommand{\SS}{\mathfrak{S}}
\renewcommand{\Im}{\operatorname{Im}}
\newcommand{\Frob}{\operatorname{Frob}}
\newcommand{\Gal}{\operatorname{Gal}}
\newcommand{\bound}{\numprint{12000}}
\newcommand{\GL}{\mathrm{GL}}
\newcommand{\SL}{\mathrm{SL}}
\title{Experimental evidence for Maeda's conjecture on modular forms
\footnote{
We thank David Harvey for asking a question that lead us to
  drastically improve our Sage implementation, and David Farmer, Gabor Wiese
and the referees for very useful comments.}
}
\author{
Alexandru Ghitza\footnote{
  Research of the first author was supported by 
a Discovery Grant from the Australian Research Council.
Some of the computations described in this paper were performed on the Sage
cluster at the University of Washington, partly supported by National 
Science Foundation Grant No. DMS-0821725, held by William Stein.}  
{} and 
Angus McAndrew\\
Department of Mathematics and Statistics\\
University of Melbourne\\
{\tt aghitza@alum.mit.edu}, {\tt mcandrew@student.unimelb.edu.au}
}
\date{}
\begin{document}
\thispagestyle{empty}

\maketitle
\begin{abstract}
  We describe a computational approach to the verification of Maeda's conjecture
  for the Hecke operator $T_2$ on the space of cusp forms of level one. We provide
  experimental evidence for all weights less than $\bound$, as well as some
  applications of these results.  The algorithm was implemented using the
  mathematical software Sage, and the code and resulting data were made
  freely available.
\end{abstract}

\section{Introduction}
\label{sect:introduction}
Modular forms come in many different types.  One of the most attractive
aspects of the theory is that, despite the apparent variety of definitions
and properties, there are some universal guiding principles (such as the
Langlands program) that serve to unify and motivate this diversity.  On the
other hand, there are some special properties that seem to occur in isolation.
One such instance is provided by a conjecture formulated by Maeda, which
indicates a behavior that seems to be specific\footnote{We must note that
  recent work of Tsaknias~\cite{Tsaknias} points to a generalisation of
Maeda's conjecture to forms of higher level and promises to shed new
conceptual light on these questions.  We thank Gabor Wiese for
bringing Tsaknias' preprint to our attention.} to modular forms of level one
on $\GL_2$.  

Before describing Maeda's conjecture in more detail, we review some basic 
definitions and properties of modular forms.  For a thorough treatment of
the background needed in this paper, the reader is invited to
consult~\cite{Stein}.

Let $k\in\ZZ$.  A \emph{modular form} of level $1$ and weight $k$ is a
holomorphic function
\begin{equation*}
  f\colon\cH\longto\CC, \quad\text{where }
  \cH=\{z\in\CC\mid \Im z>0\},
\end{equation*}
satisfying
\begin{itemize}
  \item Modularity: for all $z\in\cH$ and all
    $g=\left(\begin{smallmatrix}a&b\\c&d\end{smallmatrix}\right)\in\SL_2(\ZZ)$,
      \begin{equation*}
        f\left(\frac{az+b}{cz+d}\right)=(cz+d)^kf(z).
      \end{equation*}
  \item Holomorphicity at $i\infty$: a holomorphic function $f$ satisfying the
    modularity condition satisfies $f(z+1)=f(z)$ for all $z\in\cH$, so it has
    a Fourier expansion
    \begin{equation*}
      f(z)=\sum_{n=-\infty}^\infty a_nq^n,\quad\text{where we set }
      q=e^{2\pi i z}.
    \end{equation*}
    We ask for $f$ to be \emph{holomorphic at $i\infty$}, i.e. that $a_n=0$
    for all $n<0$.
\end{itemize}

We say that a modular form $f$ is a \emph{cusp form} if $a_0=0$.  The cusp
forms of weight $k$ form a vector space $S_k$.  These vector spaces are
equipped with a family of \emph{Hecke operators} $T_m$ (for $m\in\NN$), whose
effect on the Fourier expansion $f(q)=\sum a_nq^n$ of $f\in S_k$ is given by
\begin{equation*}
  (T_m f)(q)=\sum_{n=1}^\infty \left(\sum_{d\mid\gcd(m,n)}d^{k-1}a_{mn/d^2}\right)q^n.
\end{equation*}

The complex vector space $S_k$ has dimension 
\begin{equation*}
  d=\begin{cases}
    \left[\frac{k}{12}\right]-1 & \text{if }k\equiv 2\pmod{12},\\
    \left[\frac{k}{12}\right] & \text{if }k\not\equiv 2\pmod{12}.
  \end{cases}
\end{equation*}

Let $F$ denote the characteristic polynomial of the operator $T_2$ acting on
$S_k$, and let $d=\dim S_k$.
In the 1970s, Yoshitaka Maeda noticed that $F$ is irreducible over $\QQ$ for
all $k$ such that $d\leq 12$.  In the 1990s, Lee-Hung~\cite{LeeHung} and
Buzzard~\cite{Buzzard} studied these polynomials further and observed in a
number of cases that the Galois group of $F$ is the symmetric group
$\SS_d$.  Shortly thereafter, Maeda made the following conjectural statement:

\begin{conjecture}[Maeda~\cite{HidaMaeda}]
  Let $m>1$ and 
  let $F$ be the characteristic polynomial of the Hecke operator $T_m$ acting
  on $S_k$.  Then 
  \begin{enumerate}
    \item the polynomial $F$ is irreducible over $\QQ$;
    \item the Galois group of the splitting field of $F$ is the full symmetric
      group $\SS_d$, where $d$ is the dimension of $S_k$.
  \end{enumerate}
\end{conjecture}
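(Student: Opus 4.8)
The plan is to prove assertion (2), since it implies (1): the symmetric group $\SS_d$ is transitive on the $d$ roots of $F$, and transitivity of the Galois group is equivalent to irreducibility over $\QQ$. I therefore fix $m>1$ and $k$, write $G$ for the Galois group of the splitting field of $F$ realized as a subgroup of $\SS_d$, and aim to show $G=\SS_d$. The group-theoretic backbone is Jordan's classical criterion that a primitive permutation group containing a transposition is the full symmetric group. Since primitivity entails transitivity, and transitivity of $G$ is equivalent to irreducibility of $F$ over $\QQ$, the proof would proceed by first establishing irreducibility, then upgrading transitivity to primitivity, and finally exhibiting a single transposition in $G$.

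The natural tool for realizing group elements is Dedekind's theorem: for a prime $\ell$ of good reduction, the factorization type of $F\bmod\ell$ is the cycle type of $\Frob_\ell$ in $G$. Transitivity, equivalently irreducibility of $F$, would follow from a single prime $\ell$ with $F\bmod\ell$ irreducible, yielding a $d$-cycle; in Hecke-theoretic terms this asks that the mod-$\ell$ algebra generated by $T_m$ on $S_k$ be a degree-$d$ field over $\FF_\ell$. A transposition would follow from a prime $\ell$ at which $F\bmod\ell$ has exactly one irreducible quadratic factor and $d-2$ distinct linear factors, or equivalently from a prime of tame, simple ramification in the splitting field, which amounts to controlling the $\ell$-adic valuation of $\operatorname{disc}(F)$. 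Primitivity could then be extracted from the presence of a cycle of suitable prime length together with transitivity.

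Each of these inputs must be tied to the arithmetic of the Hecke eigenvalues $a_m(f)$, where $f$ ranges over the normalized newforms of weight $k$ and level one, since the roots of $F$ are exactly these eigenvalues. Here one would bring in the total reality of the eigenvalues, the Deligne bound $|a_p(f)|\le 2p^{(k-1)/2}$, and the mod-$\ell$ Galois representations $\rho_{f,\ell}$ attached to the eigenforms, whose characteristic polynomials of $\Frob_\ell$ encode the $a_\ell(f)$. The factorization patterns of $F\bmod\ell$ demanded above then translate into precise statements about the congruences, or the absence thereof, among the distinct $f$ in a fixed weight $k$.

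The hard part — and the reason the conjecture remains open — is the \emph{uniform} production, for all $m>1$ and all $k$ simultaneously, of the primes $\ell$ required above. No general theorem guarantees a prime of irreducible reduction, let alone one contributing a bare transposition, in a manner controlled as $k\to\infty$; this is exactly the fine information about congruences between level-one eigenforms that lies beyond current techniques. A successful attack would likely need a genuinely new conceptual ingredient, such as an equidistribution (Chebotarev-type) statement for the factorization types of $F$ modulo varying $\ell$, or the higher-level reorganization of these congruences suggested by Tsaknias; barring such input, the primitivity-plus-transposition strategy cannot presently be made uniform in $k$.
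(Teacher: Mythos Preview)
The statement you were asked to prove is Maeda's \emph{conjecture}; the paper does not contain a proof of it, because none is known. Your proposal is honest about this: you sketch the natural group-theoretic reduction and then explain, correctly, that the required uniform supply of primes with prescribed factorization types is beyond current methods. So there is no gap to name in the sense of a flawed argument---rather, there is simply no proof on either side to compare.

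It is worth noting that your outlined strategy is exactly the one the paper exploits \emph{computationally} for each fixed weight $k\leq\bound$: the Buzzard--Conrey--Farmer lemma (Section~\ref{sect:density}) shows that finding primes of types I, II, and III for $F$ yields $G=\SS_d$, and these types correspond precisely to your desiderata (a $d$-cycle from an irreducible reduction, an element with a single $2$-cycle giving a transposition after powering, and a long prime cycle forcing primitivity). The paper's contribution is to carry this out one weight at a time by searching for such primes; your proposal correctly identifies that turning this into a proof for all $k$ would require an equidistribution or Chebotarev-type input that is not currently available.
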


The conjecture has enjoyed constant attention over the last 15 years, with
theoretical as well as computational results.  We summarize the computational
verifications in Table~\ref{tbl:known}.

\begin{table}[h]
  \begin{center}
\begin{tabular}{l|r}
  Source & weights\\ \hline
  Lee-Hung~\cite{LeeHung} & $k\leq 62$, $k\neq 60$ \\
  Buzzard~\cite{Buzzard} & $k=12\ell$, $\ell$ prime, $2\leq\ell\leq 19$ \\
  Maeda~\cite{HidaMaeda} & $k\leq 468$ \\
  Conrey-Farmer~\cite{ConreyFarmer} & $k\leq 500$, $k\equiv 0\pmod 4$  \\
  Farmer-James~\cite{FarmerJames} & $k\leq \numprint{2000}$  \\
  Buzzard-Stein, Kleinerman~\cite{Kleinerman} & $k\leq\numprint{3000}$ \\
  Chu-Wee Lim~\cite{Lim} & $k\leq\numprint{6000}$ \\
  present paper & $k\leq\bound$  \\
\end{tabular}
\end{center}
\caption{Summary of known cases of Maeda's conjecture for $T_2$}
\label{tbl:known}
\end{table}

The theoretical results focus on whether the validity of the conjecture for
a given operator $T_m$ can be used to deduce the conjecture for other
operators $T_n$.  We state three such results, each giving a partial answer to
this question.

\begin{theorem}[Conrey-Farmer-Wallace~\cite{ConreyFarmerWallace}]
  \label{thm:CFW}
  Let $k$ be a positive even integer.  Suppose there exists $n\geq 2$ such
  that the operator $T_n$ acting on $S_k$ satisfies Maeda's conjecture.  Then
  so does $T_p$ acting on $S_k$, for every prime $p$ in the set of density
  $5/6$ defined by the conditions
  \begin{equation*}
    p\not\equiv \pm 1\pmod{5}\qquad\text{or}\qquad
    p\not\equiv \pm 1\pmod{7}.
  \end{equation*}
\end{theorem}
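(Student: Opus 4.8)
The plan is to first convert both parts of Maeda's conjecture for $T_p$ into the single assertion that $T_p$ does not act as a scalar on $S_k$, and then to establish that assertion by reducing modulo $5$ and modulo $7$.

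\emph{Step 1: reduction.} Let $f\in S_k$ be a normalized eigenform and $K=K_f=\QQ(a_2(f),a_3(f),\dots)$ its coefficient field. Since $T_n$ satisfies part~(1) of Maeda's conjecture, the (separable) characteristic polynomial $\prod_i\bigl(X-a_n(f_i)\bigr)$ of $T_n$ on $S_k$ is irreducible of degree $d$; hence the $d$ normalized eigenforms $f_i$ are all $\QQ$-conjugate, $[K:\QQ]=d$, and $\QQ(a_n(f))=K$. By part~(2), the Galois closure $\widetilde K/\QQ$ --- which is the splitting field of that polynomial --- has group $\SS_d$ acting on the $d$ embeddings $K\hookrightarrow\widetilde K$; this action is $2$-transitive, hence primitive, so the point stabilizers $\SS_{d-1}$ are maximal, i.e.\ $\QQ$ and $K$ are the only subfields of $K$. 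For any $m$, decomposing $S_k\otimes\CC=\bigoplus_\sigma\CC f^\sigma$ over the embeddings $\sigma$ of $K$, with $T_m$ acting on $\CC f^\sigma$ by the scalar $\sigma(a_m(f))$, shows that the characteristic polynomial of $T_m$ on $S_k$ is the $[K:\QQ(a_m(f))]$-th power of the minimal polynomial of $a_m(f)$. Since $\QQ(a_m(f))$ equals $\QQ$ or $K$, either $a_m(f)\in\ZZ$ and $T_m$ acts as a scalar on $S_k$, or $\QQ(a_m(f))=K$, the characteristic polynomial of $T_m$ is irreducible of degree $d$, and its splitting field is the Galois closure of $K$, namely $\widetilde K$, of group $\SS_d$ --- so in the second case $T_m$ satisfies Maeda's conjecture. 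It therefore suffices to show: if $d=\dim S_k\ge2$ and $p\not\equiv\pm1\pmod 5$ or $p\not\equiv\pm1\pmod7$, then $T_p$ does not act as a scalar on $S_k$ (for $d\le1$ there is nothing to prove).

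\emph{Step 2: the scalar case modulo $\ell$.} Suppose, for contradiction, that $T_p$ acts as the scalar $c\in\ZZ$ on $S_k$, and fix $\ell\in\{5,7\}$. The essential feature of these two primes is that $E_{\ell-1}\equiv1\pmod\ell$ (by von Staudt--Clausen), so $\ell-1\in\{4,6\}$ is one of the two weights generating the ring of level-one modular forms; equivalently, $5$ and $7$ are the only primes $\ell\ge5$ for which there is no level-one cusp form of weight $\le\ell+1$ (the first cusp form is $\Delta$, of weight $12$). Consequently, by the theory of the $\theta$-operator on mod-$\ell$ modular forms, the $\theta$-cycle of any level-one mod-$\ell$ eigenform passes through weight $\le\ell+1$, where it must be Eisenstein; hence the associated mod-$\ell$ Galois representation is reducible, and every system of Hecke eigenvalues occurring on $S_k\otimes\overline{\FF}_\ell$ satisfies $a_p\equiv p^{a}+p^{b}\pmod\ell$ for exponents with $a+b\equiv k-1\pmod{\ell-1}$ (the latter because the determinant is the $(k-1)$-st power of the mod-$\ell$ cyclotomic character). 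Because $E_{\ell-1}\equiv1$, reduction modulo $\ell$ is compatible with lower weights, and (when $\ell-1\mid k$) $\Delta$ itself --- otherwise the lowest-weight level-one cuspidal eigenform of weight $\equiv k\pmod{\ell-1}$ --- reduces to a Hecke eigenform of $S_k\otimes\overline{\FF}_\ell$. Using this, together with suitable $\theta$-twists, one shows that for $d\ge2$ the space $S_k\otimes\overline{\FF}_\ell$ carries two \emph{distinct} such systems of Hecke eigenvalues, and if $T_p$ acts as $c$ it must reduce to the $T_p$-eigenvalue of each, so that
\[
p^{a_1}+p^{b_1}\equiv p^{a_2}+p^{b_2}\pmod\ell
\]
for two distinct pairs of exponents, each summing to $k-1$ modulo $\ell-1$.

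\emph{Step 3: conclusion and density.} As $k$ is even, $k-1$ is odd, and a short computation with the few possible exponent pairs (modulo $4$ the pairs of a given odd sum are $\{0,3\},\{1,2\}$ or $\{0,1\},\{2,3\}$, and modulo $6$ there are three pairs, of which two suitable ones may always be compared) shows that the displayed congruence factors as $(1-p)^2(1+p)\equiv0$ or $(1+p)^2(1-p)\equiv0$ modulo $\ell$, hence $p^2\equiv1\pmod\ell$, i.e.\ $p\equiv\pm1\pmod\ell$. Taking $\ell=5$ and $\ell=7$ yields $p\equiv\pm1\pmod5$ and $p\equiv\pm1\pmod7$, contradicting the hypothesis on $p$; thus $T_p$ is not a scalar, and by Step~1 it satisfies Maeda's conjecture. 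Finally, by Dirichlet's theorem the primes with $p\equiv\pm1\pmod5$ have density $\tfrac12$ and those with $p\equiv\pm1\pmod7$ have density $\tfrac13$; as $5$ and $7$ are coprime the two conditions are independent, so the excluded primes have density $\tfrac16$ and the set on which the conclusion holds has density $\tfrac56$.

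\emph{Expected obstacle.} The hard part is the middle claim of Step~2: that for $d\ge2$ the reduction $S_k\otimes\overline{\FF}_\ell$ genuinely carries two \emph{different} (and, for the case analysis in Step~3, suitably incompatible) systems of Hecke eigenvalues, rather than a single one with multiplicity. This requires a careful study of filtrations and the $\theta$-cycle, done separately for each residue class of $k$ modulo $\ell-1$, to place a second eigenform of the correct weight inside $S_k\otimes\overline{\FF}_\ell$ --- an appropriate $\theta$-twist of $\bar\Delta$ --- and a way to exclude the exceptional possibility that all the conjugate eigenforms of $S_k$ are congruent to $\Delta$ modulo $\ell$ (i.e.\ that $\ell$ is totally ramified in the Hecke algebra). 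For $\ell=7$ the congruence $\Delta\equiv\eta(z)^3\eta(7z)^3\pmod7$ provides extra rigidity: its right-hand side is the reduction of the CM newform attached to $\QQ(\sqrt{-7})$, whose $p$-th coefficient vanishes modulo $7$ exactly when $p$ is inert in $\QQ(\sqrt{-7})$, so the relevant eigenvalue is completely explicit.
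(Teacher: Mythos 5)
The paper itself offers no proof of this theorem --- it is imported verbatim from Conrey--Farmer--Wallace --- so your proposal can only be measured against their argument and on its own merits. Your Step 1 is correct and is the standard reduction: Maeda for one $T_n$ forces $[K_f:\QQ]=d$ with Galois closure of group $\SS_d$, the maximality of the point stabilizer $\SS_{d-1}$ leaves $K_f$ with no proper intermediate subfields, and hence every $T_m$ either satisfies Maeda or acts as an integer scalar. The density bookkeeping at the end ($4$ bad residues out of $\varphi(35)=24$) is also fine.

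The genuine gaps are in Steps 2--3, and they go beyond the one you flag. First, the existence, for $d\ge 2$, of two eigenvalue systems on $S_k\otimes\overline{\FF}_\ell$ whose Eisenstein exponent pairs are \emph{distinct} modulo $\ell-1$ is asserted rather than proved; this is essentially the whole content of the Conrey--Farmer--Wallace computation, which pins down the complete factorization of the characteristic polynomial of $T_p$ modulo $5$ and $7$ and in particular exactly which exponent pairs occur. Second, and more damaging, the ``short computation'' of Step 3 is false for $\ell=7$: it is not true that comparing any two distinct admissible pairs forces $p\equiv\pm1\pmod 7$. With $k-1\equiv 1\pmod 6$ the admissible pairs are $\{0,1\},\{2,5\},\{3,4\}$, and comparing $\{0,1\}$ with $\{3,4\}$ gives $1+p\equiv p^3+p^4=(1+p)p^3$, i.e.\ $(1+p)(1-p)(1+p+p^2)\equiv 0\pmod 7$, which is satisfied by $p\equiv 2$ and $p\equiv 4\pmod 7$ (indeed $1+2=3$ and $2^3+2^4=24\equiv 3\pmod 7$); an analogous bad comparison exists in each residue class of $k-1$ modulo $6$. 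So the argument closes only if one knows \emph{which} pairs actually occur and verifies that a ``good'' comparison among them is always available --- precisely the information deferred to your ``expected obstacle.'' Since the mod-$5$ analysis alone only excludes a set of primes of density $1/2$, the mod-$7$ half is indispensable for the stated density $5/6$, and as written the proof does not establish it.
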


Stated differently, this says that if Maeda's conjecture in weight $k$ holds
for one index $n$, then the density of primes for which the conjecture
fails is at most $1/6$.  The next result considers only the irreducibility
part of the conjecture, but it is stronger since it says that the density of
primes for which the conjecture fails is zero.

\begin{theorem}[Baba-Murty~\cite{BabaMurty}]
  Let $k$ be a positive even integer.  Suppose there exists a prime $p$ such
  that the characteristic polynomial of $T_p$ acting on $S_k$ is irreducible
  over $\QQ$.  Then there exists $\delta>0$ such that
  \begin{equation*}
    \#\{\ell\leq N\text{ prime}\mid \text{charpoly}(T_\ell|S_k)\text{ is reducible}\}
    \ll \frac{N}{(\log N)^{1+\delta}}.
  \end{equation*}
\end{theorem}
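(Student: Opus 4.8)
The plan is to translate the reducibility of $\mathrm{charpoly}(T_\ell)$ into the vanishing of a single rational integer, and then to bound the number of $\ell\le N$ making that integer vanish by spreading the one equation over the congruence conditions $q\mid D_\ell$ for many primes $q$, using the family of mod-$q$ Galois representations attached to $S_k$. For the reduction: since $\mathrm{charpoly}(T_p)$ is irreducible of degree $d=\dim S_k$, the $d$ normalized eigenforms of weight $k$ form a single $\Gal(\overline{\QQ}/\QQ)$-orbit, with common Hecke eigenvalue field $K=\QQ(a_p)$ of degree $d$; writing $a_\ell\in\mathcal{O}_K$ for the $T_\ell$-eigenvalue, one has $\mathrm{charpoly}(T_\ell\mid S_k)=\prod_{\sigma\colon K\hookrightarrow\overline{\QQ}}(X-\sigma a_\ell)$, a power of the minimal polynomial of $a_\ell$, so it is reducible if and only if $a_\ell$ fails to generate $K$, equivalently if and only if two of its conjugates coincide, equivalently if and only if the integer $D_\ell:=\mathrm{disc}\big(\mathrm{charpoly}(T_\ell\mid S_k)\big)\in\ZZ$ vanishes. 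Finally, since $k$ has level $1$, $S_k$ contains no CM forms, so Ribet's theorem applies: for all but finitely many primes $q$, the image of $\bar\rho_q\colon\Gal(\overline{\QQ}/\QQ)\to\GL_2(\mathcal{O}_K/q\mathcal{O}_K)=\prod_{\mathfrak{q}\mid q}\GL_2(\mathcal{O}_K/\mathfrak{q})$ contains $\prod_{\mathfrak{q}\mid q}\SL_2(\mathcal{O}_K/\mathfrak{q})$, and $\mathrm{tr}\,\bar\rho_q(\Frob_\ell)\equiv a_\ell\pmod{q\mathcal{O}_K}$.

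\emph{From $D_\ell=0$ to a Chebotarev condition.} For $q$ unramified in $K$ (all but finitely many) one has $D_\ell\bmod q=\mathrm{disc}\big(\prod_{\mathfrak{q}\mid q}\mathrm{charpoly}_{\FF_q}(\mathrm{tr}\,\bar\rho_{\mathfrak{q}}(\Frob_\ell))\big)$, which depends only on the Frobenius class of $\ell$ in the finite Galois extension $L_q/\QQ$ cut out by $\bar\rho_q$ — an extension ramified only at $q$, of degree at most $q^{4d}$ and discriminant $q^{O_d(1)}$. Since the image contains $\prod_{\mathfrak q\mid q}\SL_2(\mathcal O_K/\mathfrak q)$, every tuple of traces $(\mathrm{tr}\,\bar\rho_{\mathfrak q}(g))_{\mathfrak q}$ is attained with multiplicity of the expected size, so a short computation in $\prod_{\mathfrak{q}\mid q}\GL_2(\mathcal{O}_K/\mathfrak{q})$ shows that the proportion of $g$ for which $\prod_{\mathfrak q\mid q}\mathrm{charpoly}_{\FF_q}(\mathrm{tr}\,\bar\rho_{\mathfrak q}(g))$ has a repeated root — which forces either one trace to lie in a proper subfield of its residue field, or two of the traces to be $\FF_q$-conjugate — is $O_d(1/q)$. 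Thus the conjugation-stable subset $B_q\subseteq\Gal(L_q/\QQ)$ detecting $q\mid D_\ell$ has relative density $O_d(1/q)$.

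\emph{Sieving over the primes $q$.} Because $D_\ell=0$ forces $\Frob_\ell\in B_q$ simultaneously for every good prime $q$, we get
\[
  \#\{\ell\le N:\mathrm{charpoly}(T_\ell)\text{ reducible}\}\ \le\ \#\{\ell\le N:\Frob_\ell\in B_q\text{ for all good }q\le Q\},
\]
and the right-hand side is estimated either by applying the effective Chebotarev density theorem of Lagarias--Odlyzko (in the form refined by Serre) to a single prime $q=q(N)$ growing slowly enough that its error term for $L_q$ is negligible against the main term $\frac{|B_q|}{[L_q:\QQ]}\mathrm{Li}(N)\ll N/(q\log N)$ — already yielding $\ll N/(\log N)^{1+\delta}$ once $q\asymp(\log N)^{c_d}$ — or, more robustly, by running Gallagher's larger sieve over all primes $q\le Q$, each $B_q$ cutting out few Frobenius classes modulo $L_q$. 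This is precisely the machinery used by Serre and by Murty--Murty--Saradha to bound $\#\{\ell\le N:a_\ell(f)=0\}$, transplanted from ``trace equals $0$'' to ``characteristic polynomial has a repeated root''.

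The main obstacle is this last step: one must keep the error term in the effective Chebotarev density theorem under control while the conductor $q$ of $L_q$ grows — which caps the useful range of $q$ at a small power of $\log N$ — and one must deal with the possible Siegel zero of $L_q$, whose only quadratic subfield is $\QQ(\sqrt{\pm q})$; this is handled as usual by a zero-free region together with Deuring--Heilbronn repulsion, or by accepting that $\delta$, though positive, carries an ineffective implied constant. Balancing the range of $q$ against the Chebotarev cutoff so that a genuine power of $\log N$ is saved is the crux of the argument; the reduction to $D_\ell=0$ and the group-theoretic density estimate are essentially formal.
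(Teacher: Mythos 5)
The paper does not actually prove this statement: it is quoted from Baba--Murty~\cite{BabaMurty} and used as a black box, so there is no internal proof to compare against. Measured against the published argument (which runs through Serre's effective form of the Chebotarev density theorem), your outline is the right one. The reduction of reducibility of $\mathrm{charpoly}(T_\ell)$ to the vanishing of the discriminant $D_\ell$ (equivalently to $\QQ(a_\ell)\subsetneq K$, which is how Baba--Murty phrase it), the passage to the congruences $q\mid D_\ell$ via the mod-$q$ representations, the appeal to Ribet's large-image theorem for $\prod_{\mathfrak q\mid q}\bar\rho_{\mathfrak q}$ (legitimate here: level-one forms have no CM and no inner twists), and the $O_d(1/q)$ bound for the conjugacy-stable set $B_q$ are all correct and essentially match the real proof.

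The gap is exactly where you locate it, and it is not cosmetic. Everything up to your last paragraph only shows that the exceptional $\ell$ satisfy $\Frob_\ell\in B_q$ for every good $q$; with a \emph{fixed} $q$ this saves only a constant factor over $\pi(N)$, not a power of $\log N$. To extract $(\log N)^{-\delta}$ you must actually run the unconditional Lagarias--Odlyzko--Serre theorem for $L_q/\QQ$ with $q=q(N)$ growing, check that the admissible range (roughly $n_{L_q}(\log d_{L_q})^2\ll\log N$, hence $q\ll(\log N)^{c/d}$ since $n_{L_q}\asymp q^{4d}$) still leaves a main term $\ll N/(q\log N)$ dominating the error term, and dispose of the possible exceptional zero --- here one can exploit that $L_q$ is ramified only at $q$, so its only quadratic subfield is $\QQ(\sqrt{\pm q})$, and then quantify the repulsion or accept an ineffective $\delta$. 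None of this is carried out; ``this is handled as usual'' is a citation, not a proof. In short: the architecture is correct and is in substance Baba--Murty's own, but the proposal stops precisely at the step where the theorem's actual content --- the existence of a positive $\delta$ --- has to be produced.
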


Finally, Ahlgren gave a simple criterion for extending the validity of
Maeda's conjecture from one index to another, and used it together with some
computer work to prove the following result.

\begin{theorem}[Ahlgren~\cite{Ahlgren}]\label{thm:Ahl}
  Let $k$ be such that $d:=\dim S_k\geq 2$.  Suppose there exists $n\geq 2$
  such that the operator $T_n$ acting on $S_k$ satisfies Maeda's conjecture.
  Then
  \begin{enumerate}
    \item $T_p$ acting on $S_k$ satisfies Maeda's conjecture for all primes
      $p\leq\numprint{4000000}$;
    \item $T_n$ acting on $S_k$ satisfies Maeda's conjecture for all
      $n\leq\numprint{10000}$.
  \end{enumerate}
\end{theorem}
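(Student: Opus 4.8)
The plan is to exploit a rigidity phenomenon: knowing Maeda's conjecture for a single operator $T_n$ on $S_k$ forces the algebra generated by all Hecke operators on $S_k$ to be a number field of degree $d=\dim S_k$, so that \emph{every} $T_m$ is a polynomial in $T_n$; the conjecture for $T_m$ then reduces to the single, easily checked condition that $T_m$ does not act as a scalar on $S_k$, and the remaining work is to certify that condition in the stated ranges.

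First I would extract the structural consequences of the hypothesis. Fix a basis $f_1,\dots,f_d$ of $S_k$ consisting of normalized Hecke eigenforms; it exists because the $T_m$ commute and are simultaneously diagonalizable, and in level one every eigenform is a newform. Since $S_k$ has a model over $\QQ$, the group $\Gal(\overline{\QQ}/\QQ)$ acts on the set of normalized eigenforms by $\sigma\cdot f=\sum_n\sigma(a_n(f))q^n$, so that $a_m(\sigma\cdot f)=\sigma(a_m(f))$. By the irreducibility part of Maeda's conjecture for $T_n$, the eigenvalues $a_n(f_1),\dots,a_n(f_d)$ are the $d$ distinct conjugates of an algebraic number of degree $d$; hence the $f_i$ are pairwise distinguished by their $T_n$-eigenvalue, the Galois action on $\{f_1,\dots,f_d\}$ is identified with the action on $\{a_n(f_1),\dots,a_n(f_d)\}$, and by the Galois part of Maeda's conjecture for $T_n$ this action is through the full symmetric group $\SS_d$ in its standard degree-$d$ action. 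Moreover, being irreducible, the characteristic polynomial of $T_n$ on $S_k$ equals its minimal polynomial, so $\QQ[T_n]$ acts on $S_k$ as a number field of degree $d$ and makes $S_k$ a one-dimensional vector space over it. As each $T_m$ commutes with $T_n$, it is $\QQ[T_n]$-linear, hence $T_m=G_m(T_n)$ on $S_k$ for a unique $G_m\in\QQ[X]$ with $\deg G_m<d$; in particular $a_m(f_i)=G_m(a_n(f_i))$ for every $i$.

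Next comes the dichotomy. Fix $m\geq 2$. The roots of the characteristic polynomial of $T_m$ on $S_k$ are the $d$ numbers $G_m(a_n(f_i))$, and $\Gal(\overline{\QQ}/\QQ)$ permutes them exactly as it permutes the $f_i$, that is, through $\SS_d$ in its standard action on the index set. For $d\geq 2$ that action is primitive, so the partition of $\{1,\dots,d\}$ into the fibres of $i\mapsto G_m(a_n(f_i))$ is either discrete or consists of a single block. In the single-block case all the $a_m(f_i)$ equal a common value fixed by $\Gal(\overline{\QQ}/\QQ)$, hence lying in $\QQ$ (indeed in $\ZZ$), and, $T_m$ being diagonalizable, it acts on $S_k$ as that scalar. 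In the discrete case the $a_m(f_i)$ are pairwise distinct, so the characteristic polynomial of $T_m$ is irreducible; since $\Gal(\overline{\QQ}/\QQ)$ permutes these distinct roots through its standard action on $\{f_1,\dots,f_d\}$, the splitting field of the characteristic polynomial of $T_m$ coincides with that of $T_n$ and has Galois group $\SS_d$, so $T_m$ satisfies Maeda's conjecture. In short: \emph{$T_m$ satisfies Maeda's conjecture on $S_k$ if and only if $T_m$ does not act as a scalar on $S_k$}; equivalently, if and only if $\operatorname{tr}(T_m\mid S_k)^2\neq d\cdot\operatorname{tr}(T_m^2\mid S_k)$.

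It remains to certify non-scalarity. By the dichotomy, Theorem~\ref{thm:Ahl} follows once one knows that $T_m$ acts non-scalarly on $S_k$ for every $k$ with $\dim S_k\geq 2$, for all primes $m\leq\numprint{4000000}$ in part~(1) and for all $m\leq\numprint{10000}$ in part~(2). For bounded weight this is a finite computation, done directly. \textbf{The main obstacle is the range of large $k$}, where no direct computation is possible: here I would substitute the Eichler--Selberg trace formula into the scalar relation $\operatorname{tr}(T_m\mid S_k)^2=d\operatorname{tr}(T_m^2\mid S_k)$, using $T_m^2=\sum_{e\mid m}e^{k-1}T_{m^2/e^2}$ to express the right-hand trace through the traces $\operatorname{tr}(T_{m^2/e^2}\mid S_k)$, and thereby reduce it to an identity in $k$ between explicit combinations of Hurwitz class numbers weighted by the quantities $(\rho^{k-1}-\bar\rho^{k-1})/(\rho-\bar\rho)$ attached to quadratics $X^2-tX+N$. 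A sufficiently careful estimation of the growth in $k$ of the two sides then shows this identity can hold for at most finitely many weights, which are dispatched by computer; making those estimates uniform and explicit enough to extract a concrete weight bound is the technically demanding step. Running the argument for each $m$ in the two ranges gives both parts of the theorem --- the prime case reaching farther because for prime $m$ the quantities involved are appreciably simpler, so both the estimate and the residual computation are cheaper.
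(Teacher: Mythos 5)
The paper does not actually prove this statement --- it is quoted from Ahlgren's article with a citation --- so I can only judge your argument on its own terms. Its first half is correct and is indeed the standard reduction: if Maeda holds for one $T_n$ on $S_k$, then $\QQ[T_n]$ is a degree-$d$ field over which $S_k$ is free of rank one, every $T_m$ is a polynomial in $T_n$, the fibres of $i\mapsto a_m(f_i)$ form a block system for the primitive standard action of $\SS_d$, and hence for each $m$ either $T_m$ acts as an integer scalar or its characteristic polynomial is irreducible with the same splitting field as that of $T_n$ and Galois group $\SS_d$. This dichotomy (``Maeda for $T_m$ fails only if $T_m$ is scalar'') is exactly the right first step.

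The gap is in the second half, which is where all the content of the theorem lives: you must rule out the scalar case for \emph{every} $k$ with $\dim S_k\geq 2$ --- an infinite family of weights --- and for every $m$ in the stated ranges. Your plan to substitute the Eichler--Selberg trace formula into $\operatorname{tr}(T_m)^2=d\,\operatorname{tr}(T_m^2)$ and ``estimate growth in $k$'' is asserted rather than carried out, and it is doubtful as described: for fixed $m$ and growing $k$, all the class-number terms $(\rho^{k-1}-\bar\rho^{k-1})/(\rho-\bar\rho)$ in $\operatorname{tr}(T_m)$ have the same exponential order $m^{(k-1)/2}$ (and every term contributing to $\operatorname{tr}(T_m^2)$ has order $m^{k-1}$), so there is no dominant term whose growth forces the identity to fail for large $k$; and even granting an effective bound, you would still face a separate residual computation over a range of weights for each of the roughly $283{,}000$ primes up to four million. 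The step that actually makes the theorem provable ``with some computer work'' is a criterion \emph{uniform in $k$}: if $T_m$ acts as the scalar $\lambda$ on $S_k$, then $a_m(g)=\lambda\,a_1(g)$ for every $g\in S_k$; applying this to $g=\Delta^2E_4^aE_6^b$ (which has $a_1=0$ and exists whenever $d\geq 2$) and using $E_4\equiv E_6\equiv 1\pmod{24}$ shows that scalarity forces $a_m(\Delta^2)\equiv 0\pmod{24}$ --- a condition on $m$ alone, involving the coefficients of a single fixed form independent of $k$, which can then be checked by machine for each $m$ in the two ranges. Without an argument of this uniform-in-$k$ type (or a completed, fully explicit version of your trace-formula estimate), your proof is incomplete at its crucial step.
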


We can now state our main result.

\begin{theorem}\label{thm:main}
  Let $k\leq \bound$ and let
  \begin{align*}
    n\in &\{2,\ldots,\numprint{10000}\}\cup
    \{p\text{ prime}\mid 2\leq p\leq\numprint{4000000}\}\\
    &\cup\{p\text{ prime}\mid p\not\equiv\pm 1\pmod{5}\}\cup
    \{p\text{ prime}\mid p\not\equiv\pm 1\pmod{7}\}.
  \end{align*}
  Let $F$ be the characteristic polynomial of the
  Hecke operator $T_n$ acting on the space $S_k$ of cusp forms of weight
  $k$ and level $1$.  Then $F$ is irreducible over $\QQ$ and the Galois
  group of its splitting field is the full symmetric group $\SS_d$, 
  where $d$ is the dimension of the space $S_k$.
\end{theorem}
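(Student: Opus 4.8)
The strategy is to reduce the entire family of indices $n$ in the statement to a single computational fact: for each weight $k\le\bound$, the characteristic polynomial $F_2$ of $T_2$ acting on $S_k$ is irreducible over $\QQ$ with Galois group $\SS_d$, $d=\dim S_k$. Granting this, the theorem follows by combining the three extension results quoted above. First I would dispose of the trivial cases $d\le 1$ (i.e. $k<12$ and $k=14$), where $S_k$ is zero- or one-dimensional and every $T_n$ acts as a scalar, so the characteristic polynomial is linear and the statement is vacuous (with $\SS_0$ or $\SS_1$ the trivial group). For the remaining weights we have $d\ge 2$, so Ahlgren's Theorem \ref{thm:Ahl} applies with $n=2$: the verified Maeda property for $T_2$ on $S_k$ immediately yields that $T_p$ satisfies Maeda's conjecture for all primes $p\le\numprint{4000000}$ and that $T_n$ satisfies it for all $n\le\numprint{10000}$. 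This covers the first two sets of indices in the definition of $n$. For the last two sets — primes $p\not\equiv\pm1\pmod 5$ or $p\not\equiv\pm1\pmod 7$ — I would invoke Theorem \ref{thm:CFW} of Conrey-Farmer-Wallace with the same witness $n=2$: since $T_2$ on $S_k$ satisfies Maeda's conjecture, so does $T_p$ for every prime $p$ in exactly the density-$5/6$ set described there, which is precisely the union of those last two index sets. Taking the union of all four conclusions gives the theorem.

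The real content, therefore, is the computational verification underlying the reduction: one must actually show that $F_2$ is irreducible and has Galois group $\SS_d$ for every $k\le\bound$. This is the main obstacle, and it is algorithmic rather than conceptual. Irreducibility can be checked by factoring $F_2$ over $\QQ$ (or, faster, by reduction modulo several small primes to rule out factorizations, falling back to a genuine factorization only when needed). For the Galois group, the classical approach is: having established irreducibility, it suffices to exhibit enough cycle types among the Frobenius elements to force the group to be all of $\SS_d$ — concretely, one computes factorization patterns of $F_2 \bmod \ell$ for many primes $\ell$ of good reduction, and uses the fact that a transitive subgroup of $\SS_d$ containing a transposition and a $(d-1)$-cycle (or more robustly, one containing a $p$-cycle for some prime $p$ with $d/2<p<d-2$, together with a transposition, by a theorem of Jordan) must be $\SS_d$. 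The delicate points are (i) computing the matrix of $T_2$ on $S_k$ efficiently for very large $k$ — this is where the modular symbols or the trace-formula-plus-Victor-Miller-basis machinery, and the improved Sage implementation acknowledged in the paper, enter — and (ii) controlling the primes one reduces modulo so as to avoid those dividing the discriminant of $F_2$ or otherwise giving misleading factorization data.

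A few remarks on robustness of the argument. The reduction step requires only the Maeda property for the \emph{single} index $n=2$ in each weight, so the computational burden is exactly the verification of one characteristic polynomial per weight; the breadth of the index set in Theorem \ref{thm:main} costs nothing extra once the three cited theorems are in hand. One should double-check that the hypothesis "$k$ a positive even integer" in Theorems \ref{thm:CFW} and \ref{thm:Ahl} is harmless: for odd $k$, and more generally whenever $\SL_2(\ZZ)$ forces $S_k=0$, there is nothing to prove, so we may assume $k$ even with $S_k\ne 0$. Finally, the two Galois-group conclusions (for primes up to four million and for all indices up to ten thousand) in Ahlgren's theorem are independent statements about different index ranges, and the union in the theorem's hypothesis on $n$ is a genuine union, not an intersection — so no index outside these explicitly described sets is claimed, and the proof makes no assertion there.
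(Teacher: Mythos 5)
Your proposal is correct and follows essentially the same route as the paper: verify Maeda's conjecture for $T_2$ in each weight $k\leq\bound$ by a multimodular computation (exhibiting reductions mod primes whose factorization patterns force irreducibility, a transposition, and a long prime cycle in the Galois group), then transfer to all other indices $n$ via Ahlgren's theorem and the Conrey--Farmer--Wallace theorem exactly as you describe. The only cosmetic difference is that the paper proves its group-theoretic criterion (transitive plus transposition plus $p$-cycle with $p>d/2$ implies $\SS_d$) directly rather than citing Jordan's theorem, which lets it avoid the extra constraint $p<d-2$.
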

\begin{proof}
  The statement for $T_2$ is the result of the computations described below.  
  Given this, we deduce the result for the other $T_n$ by applying the results of
  Conrey-Farmer-Wallace and Ahlgren, as stated above.
\end{proof}

Our computational approach follows the ``multimodular'' method introduced by
Buzzard in~\cite{Buzzard} and refined by Conrey-Farmer in~\cite{ConreyFarmer}.
The main improvement is the use of random primes of moderate size, instead of
going through primes consecutively until suitable ones are found.  In
Section~\ref{sect:density} we describe the theoretical foundation of this
approach, and we estimate the densities of the different types of primes we
are looking for.  This provides us with expected running times for our
randomized algorithm, the Sage implementation of which we discuss in detail in
Section~\ref{sect:implementation}.  Finally, Section~\ref{sect:applications}
gives some direct corollaries of Theorem~\ref{thm:main} to some questions
about modular forms of level one.

We have made the code and data used to verify Theorem~\ref{thm:main} available
at 

\centerline{\url{http://bitbucket.org/aghitza/maeda_data}}

\section{Polynomial factorization and Frobenius elements}
\label{sect:frobenius}
Our algorithm is based on a correspondence between the factorization of
polynomials over finite fields and the cycle decomposition of Frobenius
elements in Galois groups.  We give a short review of these
results, which go back all the way to the beginnings of algebraic number theory,
appearing for instance in the work of Frobenius.  A fascinating exposition of the
mathematics and history of these ideas is given by Stevenhagen and Lenstra
in~\cite{StevenhagenLenstra}.

We start with a bit of terminology.  If $\tau$ is a permutation on $d$ letters, 
it can be decomposed into a product of disjoint cycles, uniquely up to
permutation of the cycles.  We say that $\tau$ has \emph{cycle pattern}
$d_1^{m_1}d_2^{m_2}\ldots d_t^{m_t}$ if its decomposition contains exactly
$m_j$ cycles of length $d_j$, for $j=1,\ldots,t$.  
  (Note: $m_1d_1+m_2d_2+\ldots +m_td_t=d$.) 
If $H$ is a polynomial in
$\FF_p[X]$, we say that $H$ has \emph{factorization pattern}
$d_1^{m_1}d_2^{m_2}\ldots d_t^{m_t}$ if $H$ has exactly $m_j$ irreducible
factors of degree $d_j$ over $\FF_p$.  We recall that $H$ is said to be
\emph{separable} if it has distinct roots over $\overline{\FF}_p$.

\begin{lemma}
  \label{lem:frobenius}
  Let $F\in\ZZ[X]$ be monic, let $p$ be a prime and let $F_p\in\FF_p[X]$ be
  the reduction of $F$ modulo $p$.  If $F_p$ is separable, then there exists
  an element $\tau$ of the Galois group of $F$ such that the cycle pattern of
  $\tau$ is the same as the factorization pattern of $F_p$.
\end{lemma}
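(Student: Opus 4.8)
The plan is to realize the Galois group $G = \Gal(F)$ as a transitive permutation group on the roots of $F$, and then identify $\tau$ with a Frobenius element at a prime above $p$. First I would let $K = \QQ[X]/(F)$ be the \'etale $\QQ$-algebra (a number field if $F$ is irreducible, a product otherwise), let $L$ be the splitting field of $F$ over $\QQ$, and fix an ordering $\alpha_1,\ldots,\alpha_d\in L$ of the roots of $F$, so that $G = \Gal(L/\QQ)$ acts on $\{1,\ldots,d\}$ by permuting indices. Let $\mathcal{O}$ be the ring of integers of $L$ (or, more economically, the subring $\ZZ[\alpha_1,\ldots,\alpha_d]$, which suffices below). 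The key hypothesis is that $F_p$ is separable over $\FF_p$; I would first note this forces $p$ to be coprime to the relevant discriminant, so that reduction mod a prime $\mathfrak{p}$ of $\mathcal{O}$ above $p$ carries the $\alpha_i$ to $d$ distinct elements of $\mathcal{O}/\mathfrak{p}$.

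Next I would choose any prime $\mathfrak{p}\subset\mathcal{O}$ lying over $p$ and consider its decomposition group $D_{\mathfrak{p}}\subseteq G$. Since $p$ is unramified (by separability of $F_p$), the reduction map identifies $D_{\mathfrak{p}}$ with the Galois group $\Gal\bigl((\mathcal{O}/\mathfrak{p})/\FF_p\bigr)$, which is cyclic generated by the Frobenius automorphism $x\mapsto x^p$; let $\tau\in D_{\mathfrak{p}}\subseteq G$ be the element mapping to this Frobenius, i.e.\ the Frobenius element $\Frob_{\mathfrak{p}}$. This $\tau$ is the candidate permutation. It remains to match its cycle pattern to the factorization pattern of $F_p$.

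For the matching step, I would use the standard dictionary between the factorization of $F$ at $p$ and the orbit structure of $\tau$. Reducing the $\alpha_i$ modulo $\mathfrak{p}$ gives the $d$ distinct roots $\bar\alpha_i$ of $F_p$ in $\overline{\FF}_p$; grouping these roots according to which irreducible factor of $F_p$ they satisfy partitions $\{1,\ldots,d\}$, and two indices lie in the same block exactly when $\bar\alpha_i$ and $\bar\alpha_j$ are conjugate over $\FF_p$, i.e.\ when some power of $x\mapsto x^p$ sends one to the other. Since $\tau$ acts on the $\bar\alpha_i$ precisely as this Frobenius power does (because $\tau$ reduces to $x\mapsto x^p$ on $\mathcal{O}/\mathfrak{p}$ and commutes with reduction), the $\tau$-orbits on $\{1,\ldots,d\}$ coincide with these blocks, and the length of the orbit through $i$ equals the degree over $\FF_p$ of $\bar\alpha_i$, hence the degree of the irreducible factor of $F_p$ it satisfies. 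Therefore the cycle pattern of $\tau$ equals the factorization pattern of $F_p$, as required.

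The main obstacle — really the only subtle point — is justifying that $\tau$ acts on the reduced roots exactly as the arithmetic Frobenius, which hinges on the unramifiedness extracted from separability of $F_p$: one must check that reduction mod $\mathfrak{p}$ is injective on $\{\alpha_1,\ldots,\alpha_d\}$ (so distinct roots stay distinct) and that $D_{\mathfrak{p}}\to\Gal((\mathcal{O}/\mathfrak{p})/\FF_p)$ is an isomorphism rather than merely surjective. Both follow from $p\nmid\operatorname{disc}(F)$, which is implied by $F_p$ being separable; I would spell out that implication and then the rest is bookkeeping. (Strictly, one should also remark that $G$ is the same whether one views $F$ as possibly reducible — $\Gal(F)$ meaning the Galois group of its splitting field — so the statement makes sense without assuming irreducibility.)
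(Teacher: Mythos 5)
Your proposal is correct and follows essentially the same route as the paper: choose a prime $\mathfrak{p}$ of the splitting field above $p$, take $\tau=\Frob_{\mathfrak{p}}$, and match its orbits on the roots with the orbits of $x\mapsto x^p$ on the roots of $F_p$, which correspond to the irreducible factors. You are somewhat more explicit than the paper about the two technical points (injectivity of reduction on the roots and the isomorphism $D_{\mathfrak{p}}\cong\Gal\bigl((\mathcal{O}/\mathfrak{p})/\FF_p\bigr)$, both coming from $p\nmid\operatorname{disc}(F)$), but the substance is identical.
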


We sketch a proof of this classical result.

Fix a prime $p$ and consider the field automorphism
$\sigma\colon\overline{\FF}_p\longto\overline{\FF}_p$ given by $\sigma(a)=a^p$.
Since $\sigma$ fixes the subfield $\FF_p$, it permutes the roots of any
polynomial $H\in\FF_p[X]$.  Moreover, Galois theory tells us that the cycle
pattern of $\sigma$ (viewed as a permutation) is the same as the factorization
pattern of $H$ over $\FF_p$. 

We now take a monic polynomial $F\in\ZZ[X]$ and we let $K/\QQ$ be its
splitting field, $\mathcal{O}_K$ the ring of integers of $K$, and $G$ the
Galois group of $K/\QQ$.  Let $\mathfrak{p}$ be a prime in $\mathcal{O}_K$
over $p$.  Suppose the reduction $F_p$ of $F$ modulo $p$ is a separable
polynomial (in this case, we say that $p$ is unramified in $K/\QQ$).  Then
there is a \emph{Frobenius element} $\Frob_{\mathfrak{p}}\in G$ determined
uniquely by the property
\begin{equation*}
  \Frob_{\mathfrak{p}}(\alpha)\equiv\sigma(\alpha)\pmod{\mathfrak{p}}\qquad
  \text{for all }\alpha\in\mathcal{O}_K.
\end{equation*}

This implies that $\Frob_{\mathfrak{p}}$ permutes the roots
$\alpha_1,\ldots,\alpha_d\in\mathcal{O}_K$ of $F$ in the exact same way as
$\sigma$ permutes the roots in $\overline{\FF}_p$ of $F_p$.  We conclude that
the cycle pattern of $\Frob_{\mathfrak{p}}$ is the same as the factorization
pattern of $F_p$ over $\FF_p$.  Therefore we can take $\tau$ in the conclusion
of Lemma~\ref{lem:frobenius} to be $\Frob_{\mathfrak{p}}$.

Note that $\tau$ is not uniquely determined by $F$ and $p$, as the choice of a
prime $\mathfrak{p}$ of $\mathcal{O}_K$ above $p$ matters.  However, any two
such $\tau$ are conjugate in the Galois group.

The following result follows easily from Lemma~\ref{lem:frobenius} and the
fact that for any $F\in\ZZ[X]$ there are only finitely many primes $p$ (namely
the ones dividing the discriminant of $F$) for which $F_p$ is not separable.

\begin{theorem}[Frobenius]\label{thm:frobenius}
  Let $F\in\ZZ[X]$ be monic, let $K/\QQ$ be the splitting
  field of $F$ and let $G$ be the Galois group of $K/\QQ$.  
  Let $\deg F=m_1d_1+\ldots+m_td_t$ be a partition of $\deg F$.  
  The density of primes $p$ for which $F_p$ has factorization pattern
  $d_1^{m_1}\ldots d_t^{m_t}$ is equal to
  \begin{equation*}
    \frac{\#\{\sigma\in G\mid\text{the cycle pattern of $\sigma$ is 
    $d_1^{m_1}\ldots d_t^{m_t}$}\}
    }{\# G}.
  \end{equation*}
\end{theorem}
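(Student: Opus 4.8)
The plan is to deduce the theorem from Lemma~\ref{lem:frobenius} together with the Chebotarev density theorem, the latter being the only substantial ingredient.

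First I would dispose of the exceptional primes. The reduction $F_p$ fails to be separable exactly for the finitely many primes dividing the discriminant of $F$; a finite set of primes has density zero, so these primes contribute nothing to any density and may be ignored throughout.

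Next, for an unramified prime $p$, the discussion following Lemma~\ref{lem:frobenius} shows that for any prime $\mathfrak{p}$ of $\mathcal{O}_K$ above $p$ the element $\Frob_{\mathfrak{p}}\in G$ permutes the roots $\alpha_1,\dots,\alpha_d$ of $F$ in the same way that the $p$-power map permutes the roots of $F_p$ over $\overline{\FF}_p$; hence the cycle pattern of $\Frob_{\mathfrak{p}}$ equals the factorization pattern of $F_p$ over $\FF_p$. Since different choices of $\mathfrak{p}$ give $G$-conjugate Frobenius elements, the conjugacy class $\Frob_p\subseteq G$ is well defined, and whether $F_p$ has a given factorization pattern depends only on $\Frob_p$. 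I would also record the elementary group theory: fixing a labeling of the roots realizes $G$ as a subgroup of the symmetric group on $\alpha_1,\dots,\alpha_d$, and the set
\[
  \Sigma = \{\sigma\in G\mid \sigma\text{ has cycle pattern }d_1^{m_1}\cdots d_t^{m_t}\}
\]
is a union of conjugacy classes of $G$, because cycle type is a conjugation invariant in the full symmetric group and hence a fortiori in $G$.

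Combining these observations, the primes $p$ for which $F_p$ has factorization pattern $d_1^{m_1}\cdots d_t^{m_t}$ are, up to the density-zero exceptional set, exactly the $p$ whose Frobenius class $\Frob_p$ lies in $\Sigma$. Applying the Chebotarev density theorem to each conjugacy class $C\subseteq\Sigma$ — which says that the density of primes with $\Frob_p=C$ equals $\#C/\#G$ — and summing over the finitely many classes making up $\Sigma$ yields the density $\#\Sigma/\#G$, which is the assertion of the theorem. The only real obstacle is Chebotarev itself, which I would invoke as a black box; I would also note that natural and Dirichlet density coincide here (Chebotarev holds for both), so the statement is unambiguous as to which notion of density is meant.
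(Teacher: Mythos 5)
Your argument is correct and complete modulo the density theorem you invoke as a black box. It is worth noting, though, that the paper does not really prove this statement at all: it merely remarks that the theorem ``follows easily'' from Lemma~\ref{lem:frobenius} together with the finiteness of the set of primes dividing the discriminant, and otherwise treats it as a classical result of Frobenius, pointing to the Stevenhagen--Lenstra exposition. That remark is not a proof --- Lemma~\ref{lem:frobenius} only produces, for each good prime, \emph{some} element of $G$ with the right cycle pattern, and says nothing about densities --- so you have correctly identified that a genuine density theorem is the missing substantial ingredient, and your deduction from Chebotarev (exceptional primes have density zero; the Frobenius class $\Frob_p$ is well defined and its cycle pattern matches the factorization pattern of $F_p$; the set $\Sigma$ of elements with a prescribed cycle pattern is a union of conjugacy classes; sum Chebotarev over those classes) is the standard modern route and is airtight. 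One historical refinement you could add: the result is named after Frobenius because he proved it in 1880, four decades before Chebotarev, using only his weaker density theorem for \emph{divisions} (an element $\sigma$ and any $\sigma^k$ with $\gcd(k,\operatorname{ord}\sigma)=1$ have the same cycle pattern, so $\Sigma$ is a union of divisions, not just of conjugacy classes); thus the theorem as stated is strictly cheaper than Chebotarev, though deducing it from Chebotarev as you do is perfectly legitimate and is what most modern references do.
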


\section{The basic lemma and density estimates}
\label{sect:density}

Consider a monic polynomial $F\in\ZZ[X]$ of degree $d$.
Given a prime $p$, we denote by
$F_p\in\FF_p[X]$ the reduction modulo $p$ of $F$.  We say that the prime
$p$ is
\begin{enumerate}
  \item \emph{of type I} if $F_p$ is irreducible over $\FF_p$;
  \item \emph{of type II} if $F_p$ factors over $\FF_p$ into a product of
    distinct irreducible factors
    \begin{equation*}
      F_p=f_0f_1\cdots f_s
    \end{equation*}
    with
    \begin{align*}
      \deg f_0 &= 2\\
      \deg f_j &\text{ odd for }j=1,\ldots,s;
    \end{align*}
  \item \emph{of type III} if $F_p$ factors over $\FF_p$ into a product of
    distinct irreducible factors
    \begin{equation*}
      F_p=f_0f_1\cdots f_s
    \end{equation*}
    with $\deg f_0>d/2$ and prime.
\end{enumerate}

\begin{remark}\label{rmk:type4}
  Hida and Maeda use a similar approach in Section~5 of~\cite{HidaMaeda}, but 
  replace primes of type III with
  \emph{primes of type IV}, i.e. $p$ such that $F_p=f_0f_1$ with $f_0$, $f_1$
  distinct and irreducible, and $\deg f_0=1$.  We will see below that primes
  of type III are significantly more common (and therefore better suited for
  our algorithm) than those of type IV.
\end{remark}

\begin{remark}
These types are not necessarily mutually exclusive: if $d$ itself is
prime, then a prime $p$ of type I is clearly also of type III.
\end{remark}

\begin{remark}
In either of the three types, the conditions imply that the reduced polynomial
$F_p$ is separable:
\begin{itemize}
  \item If $p$ is a prime of type I, then $F_p$ is irreducible, hence
    separable.
  \item If $p$ is a prime of type II or III, $F_p$ is a product of distinct
    irreducible factors.  Each of the factors is then separable, and they 
    cannot have any common roots, since otherwise they would have a
    nonconstant common factor and would therefore be reducible.  Hence $F_p$
    has distinct roots.
\end{itemize}
\end{remark}

Our computational approach to Maeda's conjecture is based on the following
result, first proved in a special case in~\cite{Buzzard} and then generalized
in~\cite{ConreyFarmer}.

\begin{lemma}[Buzzard, Conrey-Farmer]
  Let $F\in\ZZ[X]$ be a monic polynomial of degree $d$.  Suppose that $F$ has
  primes of respective types I, II and III.  Then $F$
  is irreducible over $\QQ$ and its splitting field over $\QQ$ has full Galois
  group $\SS_d$.
\end{lemma}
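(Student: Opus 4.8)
The plan is to use the Frobenius correspondence (Lemma~\ref{lem:frobenius}) to extract three carefully chosen permutations from the Galois group $G$ of $F$, and then invoke a group-theoretic argument to conclude that a transitive subgroup of $\SS_d$ containing these three permutations must be all of $\SS_d$. Concretely, let $p_{\mathrm{I}}$, $p_{\mathrm{II}}$, $p_{\mathrm{III}}$ be the primes of type I, II, III whose existence is assumed. Since in each case $F_p$ is separable, Lemma~\ref{lem:frobenius} furnishes elements $\tau_{\mathrm{I}}$, $\tau_{\mathrm{II}}$, $\tau_{\mathrm{III}}\in G\subseteq\SS_d$ whose cycle patterns match the factorization patterns of the respective $F_p$.

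First I would treat $\tau_{\mathrm{I}}$: a prime of type I means $F_p$ is irreducible of degree $d$, so $\tau_{\mathrm{I}}$ is a $d$-cycle. Hence $G$ contains a $d$-cycle, which in particular shows $G$ acts transitively on the $d$ roots, so $F$ is irreducible over $\QQ$. (This also records that $d$ divides $|G|$.) Next I would use $\tau_{\mathrm{II}}$: a prime of type II gives a factorization into distinct irreducibles $f_0 f_1\cdots f_s$ with $\deg f_0 = 2$ and all other degrees odd, so $\tau_{\mathrm{II}}$ is a product of one $2$-cycle and a collection of odd-length cycles. Raising $\tau_{\mathrm{II}}$ to the power $M=\operatorname{lcm}(\deg f_1,\ldots,\deg f_s)$, which is odd, kills every odd cycle and leaves exactly the transposition; thus $G$ contains a transposition. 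Finally, $\tau_{\mathrm{III}}$: a prime of type III gives $F_p = f_0 f_1\cdots f_s$ with $\deg f_0 = q$ prime and $q > d/2$. Raising $\tau_{\mathrm{III}}$ to the power $\operatorname{lcm}(\deg f_1,\ldots,\deg f_s)$ (which is coprime to $q$, since each $\deg f_j < d/2 < q$ so none is divisible by the prime $q$) produces an element of $G$ that is a single $q$-cycle (a $q$-cycle on the $q$ roots of $f_0$, fixing the rest).

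So $G$ is a transitive subgroup of $\SS_d$ containing a transposition and a $q$-cycle for some prime $q$ with $d/2 < q \leq d$. The concluding step is the classical group-theory fact (essentially going back to Jordan): a primitive permutation group on $d$ points that contains a transposition is the full symmetric group $\SS_d$. To apply it I must upgrade transitivity to primitivity, and here the large prime $q$ does the work: a transitive group of degree $d$ containing a $q$-cycle with $q$ prime and $q > d/2$ is primitive, because any block system would have block size dividing $d$, and a block of size strictly between $1$ and $d$ cannot be preserved by a $q$-cycle with $q > d/2$ (the $q$-cycle moves $q$ points cyclically, which is incompatible with a nontrivial block decomposition once $q$ exceeds half the degree — a block meeting the support of the $q$-cycle would have to be a union of $q$-cycle orbits of equal size, forcing the block size to be a multiple of $q$, hence $\geq q > d/2$, hence equal to $d$). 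Once primitivity is established, the transposition forces $G = \SS_d$, and since $G$ is by definition the Galois group of the splitting field of $F$, both parts of the conclusion follow.

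The main obstacle is the group-theoretic endgame — specifically, assembling the primitivity argument and then citing (or proving) Jordan's theorem that a primitive group containing a transposition is symmetric. Everything involving the Frobenius elements is routine given Lemma~\ref{lem:frobenius}; the care needed is (i) checking the coprimality of exponents so that the power-raising genuinely isolates a transposition and a prime-length cycle, and (ii) the combinatorial argument that a $q$-cycle with $q > d/2$ cannot respect a nontrivial block system. I would expect the write-up to lean on a standard reference for Jordan's theorem rather than reprove it, and to spell out carefully only the passage from ``transitive $+$ large-prime cycle'' to ``primitive.''
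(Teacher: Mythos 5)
Your proposal is correct, and the first two thirds of it coincide with the paper's proof: type I gives irreducibility (hence transitivity of $G$), type II gives an element with a unique even cycle whose odd power isolates a transposition, and type III gives an element with a cycle of prime length $q>d/2$ whose appropriate power isolates a $q$-cycle, with the same coprimality checks on the exponents. The difference is entirely in the group-theoretic endgame. The paper does not mention primitivity or Jordan's theorem; instead it runs a self-contained argument: declare $i\sim j$ when $(i\ j)\in G$, observe that transitivity forces all equivalence classes to have a common size $n\mid d$ with $n>1$, restrict the same relation to the support $T$ of the $q$-cycle to get classes of common size $m\mid q$ with $m>1$, hence $m=q$, hence $n\geq q>d/2$, hence $n=d$ and $G=\SS_d$. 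This is in effect an inlined proof of exactly the case of Jordan's transposition theorem that is needed, with the $q$-cycle playing the role that primitivity plays in your version (the $\sim$-classes are a block system, which is what makes the ``all classes have equal size dividing $d$'' step work). Your route buys a cleaner modular structure at the cost of citing Jordan's theorem and supplying a separate primitivity argument; on that last point, your parenthetical is slightly under-justified as written --- you should first rule out that the $q$-cycle \emph{moves} a nontrivial block (if it did, the block would lie in a $\langle\sigma\rangle$-orbit of $q$ pairwise disjoint blocks of size $\geq 2$, giving $d\geq 2q>d$), after which every block is $\sigma$-invariant, hence a union of $\sigma$-orbits, hence either disjoint from $T$ or of size $\geq q>d/2$, forcing triviality. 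With that one line added, your proof is complete and is a legitimate, standard alternative to the paper's more elementary hands-on argument.
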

\begin{proof}
  The fact that $F$ is irreducible is immediate from the existence of a prime of type
  I. 

  Let $K/\QQ$ be the splitting field of $F$ and let $G$ be the Galois group of
  $K/\QQ$.  Since $F$ is irreducible, $G$ is a transitive subgroup of $\SS_d$. 

  We also have a prime of type II.
  By Lemma~\ref{lem:frobenius}, there exists $\tau_1\in G$ whose decomposition
  into disjoint cycles contains exactly one even cycle (of length
  $2$).  Let $a$ be the least common multiple of the lengths of the other
  cycles in $\tau_1$, then $\tau_1^a\in G$ is a transposition.
  
  Finally, there is a prime of type III.  By Lemma~\ref{lem:frobenius}, there
  exists $\tau_2\in G$ whose decomposition into disjoint cycles contains one
  cycle of prime length $p>d/2$.  Therefore the other cycles have
  lengths that are coprime to $p$; letting $b$ denote the least common
  multiple of these lengths, we find that $\tau_2^b\in G$ is a $p$-cycle.

  We now use the existence of these elements of $G$ to conclude that
  $G=\SS_d$.  For $i,j\in S=\{1,\ldots,d\}$, write $i\sim j$ if $i=j$ or if
  the transposition $(i~j)$ is in $G$.  This is an equivalence relation on
  $S$.  Since $G$ is
  transitive, each equivalence class has the same number $n$ of elements 
  and it follows that $n\mid d=\#S$. Note that $n>1$ since $G$ contains at least one
  transposition, namely $\tau_1^a$. 
  Let $T$ be the subset of $S$ permuted by
  $\tau_2^b$, and let $G_T$ be the subgroup of $G$ fixing $S\setminus T$.
  Define an equivalence relation on $T$ by $i\simeq j$ if $i=j$ or if the transposition
  $(i~j) \in G_T$. As before, each equivalence class has the same number $m$ of elements
  and $m\mid p=\#T$. Since $n>1$, we have $m>1$, so $m=p$ since $p$ is prime.
  But $n\geq m$ because $G_T\subset G$. Thus $n>d/2$, so $n=d$. This implies $G=\SS_d$. 
\end{proof}

Our algorithm will consist of picking random primes and checking whether they
are of type I, II or III for the characteristic polynomial of the Hecke
operator $T_2$.  According to Theorem~\ref{thm:frobenius}, 
it is therefore important to estimate the number of permutations having
certain types of cycle patterns.  For a fixed pattern, the following
well-known result 
(see, for instance, Proposition 1.3.2 of~\cite{Stanley1}) gives an exact
expression for the number of permutations.

\begin{lemma}\label{lem:cycletype}
  Let an element $\sigma$ of $\SS_d$ have cycle pattern $d_1^{m_1}d_2^{m_2}\ldots d_t^{m_t}$, 
  where $m_i$ is the number of times a cycle of length $d_i$ appears in the cycle
  decomposition of $\sigma$. 
  The number
   of elements of $\SS_d$ of cycle pattern $d_1^{m_1}d_2^{m_2}\ldots d_t^{m_t}$ is equal to
  \begin{equation*}
    C(d_1^{m_1}d_2^{m_2}\ldots d_t^{m_t})=\frac{d!}{\prod_{j=1}^t\left(d_j^{m_j}m_j!\right)}.
  \end{equation*}
\end{lemma}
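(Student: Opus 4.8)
The plan is to prove the formula by a direct counting argument: fill a fixed ``template'' of cycle slots with the symbols $1,\ldots,d$, and then divide by the number of fillings that produce each given permutation. First I would fix a template consisting of $m_1$ empty slots of length $d_1$, then $m_2$ slots of length $d_2$, and so on; since $m_1d_1+\cdots+m_td_t=d$, there are exactly $d$ positions, so writing $1,2,\ldots,d$ into them in all possible orders gives $d!$ fillings. Each filling determines a permutation of $\SS_d$ with cycle pattern $d_1^{m_1}\cdots d_t^{m_t}$ (read each slot as a cycle), and conversely every permutation of that pattern arises from at least one filling. It therefore remains to count how many fillings yield a prescribed permutation $\sigma$.

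The key point is that two fillings produce the same permutation precisely when they differ by a combination of the following two kinds of moves: (a) cyclically rotating the entries inside one of the length-$d_j$ slots, of which there are $d_j$ (including the identity) per slot and which act independently on the $m_j$ slots of that length, for a total of $\prod_j d_j^{m_j}$; and (b) permuting among themselves the $m_j$ slots that share a common length $d_j$, for a total of $\prod_j m_j!$. These moves generate a group of order $\prod_{j=1}^t\bigl(d_j^{m_j}m_j!\bigr)$ acting on the set of $d!$ fillings, its orbits are exactly the fibres of the map ``filling $\mapsto$ permutation'', and the action is free: a nontrivial slot-permutation moves some slot onto a slot holding a disjoint set of integers, so it cannot fix a filling, and a nontrivial rotation cannot fix a slot whose entries are distinct. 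Hence every permutation of the given cycle pattern is produced by exactly $\prod_{j=1}^t\bigl(d_j^{m_j}m_j!\bigr)$ fillings, and dividing $d!$ by this number yields the asserted expression for $C(d_1^{m_1}\cdots d_t^{m_t})$.

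The one step that genuinely needs care is the claim that the moves (a) and (b) generate \emph{exactly} the relation ``gives the same permutation'' — that is, that two fillings lying in different orbits really do give different permutations. I would establish this by reconstructing a filling from its permutation $\sigma$ up to these moves: the unordered multiset of cycles of $\sigma$ is canonical, and the only choices involved in turning it into a filling are a starting point within each cycle and an ordering of the cycles of each fixed length, which are precisely the ambiguities (a) and (b). For a more structural viewpoint one can instead note that the permutations of a fixed cycle pattern form a single conjugacy class of $\SS_d$, so by the orbit--stabilizer theorem their number is $d!$ divided by the order of the centralizer of any one such element, and that centralizer is the product of wreath products $\prod_j\bigl((\ZZ/d_j\ZZ)\wr\SS_{m_j}\bigr)$, of order $\prod_j d_j^{m_j}m_j!$. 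I would present the template-filling version, as it is the most self-contained and matches the combinatorial flavour of the reference cited in the statement.
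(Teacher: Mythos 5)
Your proof is correct. The paper does not give its own proof of this lemma --- it simply cites it as a well-known fact (Proposition 1.3.2 of Stanley's \emph{Enumerative Combinatorics}) --- and your template-filling argument is precisely the standard proof: $d!$ fillings of the cycle template, with each permutation of the given pattern arising from exactly $\prod_{j=1}^t d_j^{m_j}m_j!$ fillings via rotations within slots and permutations of equal-length slots. You correctly identify and handle the only delicate point (that the fibres of the map from fillings to permutations are exactly the orbits of this freely acting group), and your centralizer/orbit--stabilizer remark is a valid alternative route to the same formula.
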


\begin{proposition}\label{prop:type1}
  The density of primes of type I is 
  \begin{equation*}
    D_I(d)=\frac{1}{d}.
  \end{equation*}
\end{proposition}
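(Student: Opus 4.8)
The plan is to reduce this to a count of elements of a fixed cycle type in the symmetric group, via the Frobenius density theorem. First I would unwind the definition: a prime $p$ is of type I precisely when $F_p$ is irreducible over $\FF_p$, i.e. when $F_p$ has factorization pattern $d^1$ (a single irreducible factor of degree $d$). Since $F$ here is the characteristic polynomial of $T_2$ and we work under the hypothesis that Maeda's conjecture holds in the weight under consideration — so that the Galois group $G$ of the splitting field of $F$ equals $\SS_d$ — Theorem~\ref{thm:frobenius} identifies the density of type I primes with the proportion of elements of $\SS_d$ whose cycle pattern is $d^1$, that is, the proportion of $d$-cycles.

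The remaining step is the count. I would apply Lemma~\ref{lem:cycletype} with the single part $d_1 = d$, $m_1 = 1$, giving $C(d^1) = d!/(d^{1}\cdot 1!) = (d-1)!$ permutations of this shape. Dividing by $\#\SS_d = d!$ yields $D_I(d) = (d-1)!/d! = 1/d$, as claimed. This is the same template I expect the subsequent density propositions (for types II and III) to follow: translate the factorization condition into a condition on cycle patterns, sum the counts from Lemma~\ref{lem:cycletype} over the admissible patterns, and divide by $d!$.

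There is no substantial obstacle; the only point deserving care is that the statement is conditional, presupposing $G = \SS_d$. In context this is not circular: the density estimates exist to predict the expected running time of the randomized search for witnessing primes, so it is natural — and harmless — to assume the conjecture when estimating how quickly witnesses of each type appear. If one wished to be scrupulous, the proposition could instead be phrased as the unconditional computation that $d$-cycles make up a fraction $1/d$ of $\SS_d$, with Theorem~\ref{thm:frobenius} then converting this into a statement about primes under the Galois hypothesis.
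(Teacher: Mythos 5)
Your proof is correct and follows essentially the same route as the paper: identify type I primes with $d$-cycles via Theorem~\ref{thm:frobenius}, count $(d-1)!$ of them, and divide by $d!$. The only cosmetic difference is that you cite Lemma~\ref{lem:cycletype} for the count where the paper counts $d$-cycles directly by writing each as a sequence starting with $1$; your remark that the density statement implicitly presupposes $G=\SS_d$ is a fair and accurate reading of the paper's (unstated) convention.
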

\begin{proof}
  Primes of type I correspond to $d$-cycles in $\SS_d$.  Each such cycle can
  be written uniquely as a sequence $1,a_1,\ldots,a_{d-1}$, where
  $a_1,\ldots,a_{d-1}\in\{2,\ldots,d\}$ can appear in any order.  Therefore
  there are $(d-1)!$ $d$-cycles, and by Theorem~\ref{thm:frobenius},
  the density of primes of type I is
  \begin{equation*}
    \frac{(d-1)!}{d!}=\frac{1}{d}.
  \end{equation*}
\end{proof}

In order to state our result on primes of type II, recall that
for $n\in\ZZ_{>0}$ odd, the \emph{double factorial} $n!!$ of $n$ is
the product of all the odd positive integers less than or equal to $n$.

\begin{proposition}\label{prop:type2}
  Let $d>2$ and let $\tilde{d}$ be the largest even integer such that 
  $\tilde{d}\leq d$.
  The density of primes of type II is given by
  \begin{equation*}
    D_{II}(d)=\frac{[(\tilde{d}-3)!!]^2}{2(\tilde{d}-2)!}
  \end{equation*}
  and satisfies the inequality
  \begin{equation*}
    D_{II}(d)>\frac{1}{4\sqrt{d}}.
  \end{equation*}
\end{proposition}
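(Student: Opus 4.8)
The plan is to reinterpret ``type~II'' as a cycle-pattern condition, count the corresponding permutations, and then estimate. By Theorem~\ref{thm:frobenius}, the primes of type~II for $F$ correspond to the permutations $\sigma\in\SS_d$ whose cycle pattern consists of a single cycle of length $2$ together with cycles of odd length only; repetitions among the odd lengths are allowed, since once $F_p$ is separable its irreducible factors are automatically distinct. Hence I would first record that
\[
  D_{II}(d)=\frac{1}{d!}\,\#\{\sigma\in\SS_d\mid \sigma\text{ has exactly one cycle of length }2\text{ and all other cycles odd}\}.
\]
Choosing the support of the unique $2$-cycle in $\binom{d}{2}$ ways and an arbitrary odd-cycle permutation of the remaining $d-2$ points then gives
\[
  D_{II}(d)=\frac{\binom{d}{2}\,O(d-2)}{d!}=\frac{O(d-2)}{2\,(d-2)!},
\]
where $O(m)$ denotes the number of permutations of an $m$-element set all of whose cycles have odd length.

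The key input is a closed form for $O(m)$. By the exponential formula its exponential generating function is $\exp\!\bigl(\sum_{j \text{ odd}}x^{j}/j\bigr)=\bigl((1+x)/(1-x)\bigr)^{1/2}=(1+x)(1-x^{2})^{-1/2}$, and extracting coefficients from $(1+x)\sum_{k\ge0}\binom{2k}{k}4^{-k}x^{2k}$ yields $O(2n)/(2n)!=O(2n+1)/(2n+1)!=\binom{2n}{n}4^{-n}=(2n-1)!!/(2n)!!$, so that $O(2n)=[(2n-1)!!]^{2}$ and $O(2n+1)=(2n+1)[(2n-1)!!]^{2}$. (This fact is classical and can alternatively be derived by summing Lemma~\ref{lem:cycletype} over the partitions of $m$ into odd parts.) Substituting and splitting on the parity of $d$ now finishes the first formula: if $d$ is even then $\tilde d=d$ and $O(d-2)=[(d-3)!!]^{2}$; if $d$ is odd then $\tilde d=d-1$, $O(d-2)=(d-2)[(d-4)!!]^{2}$, and the stray factor $d-2$ cancels one factorial. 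In both cases one obtains $D_{II}(d)=[(\tilde d-3)!!]^{2}/\bigl(2(\tilde d-2)!\bigr)$, and writing $2n:=\tilde d-2$ (an even number, with $n<d/2$) this collapses to the single expression $D_{II}(d)=\tfrac12\binom{2n}{n}4^{-n}$.

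For the inequality I would invoke the standard estimate $\binom{2n}{n}4^{-n}\ge 1/(2\sqrt{n})$ for $n\ge1$, which follows by induction from the base case $n=1$ (equality) using that the ratio of successive terms equals $(2n+1)/(2n+2)$ and that $(2n+1)^{2}\ge 4n(n+1)$. This gives $D_{II}(d)\ge 1/(4\sqrt n)>1/(4\sqrt d)$, the last step because $n<d$; the single leftover value $n=0$, i.e.\ $d=3$, is checked by hand ($D_{II}(3)=\tfrac12>1/(4\sqrt3)$). I expect the only slightly fiddly point to be the even/odd bookkeeping that makes both cases collapse to the uniform $\tilde d$-formula; the generating-function identity and the central-binomial bound are both routine.
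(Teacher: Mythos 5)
Your proof is correct, and the first half follows the paper's route exactly: both arguments identify type~II primes with permutations having exactly one $2$-cycle and all other cycles odd, choose the support of the $2$-cycle in $\binom{d}{2}$ ways, and reduce to counting odd-order permutations of $d-2$ letters, giving $D_{II}(d)=\mathscr{O}(d-2)/(2(d-2)!)$. Where you diverge is in the two remaining ingredients. For the closed form of $\mathscr{O}(m)$ the paper simply cites recurrences from the OEIS and Riordan, whereas you derive it from the exponential generating function $(1+x)(1-x^2)^{-1/2}$; your derivation is self-contained and, incidentally, yields the correct squared expressions $\mathscr{O}(2n)=[(2n-1)!!]^2$ and $\mathscr{O}(2n+1)=(2n+1)[(2n-1)!!]^2$ (the paper's displayed case formula is missing the squares, though its final answer agrees with yours). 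For the lower bound the paper rewrites $D_{II}(d)$ via double factorials and applies Robbins' effective Stirling inequalities, obtaining the bound only for $d>12$ and checking $2<d\leq 12$ by hand; you instead observe that $D_{II}(d)=\tfrac12\binom{2n}{n}4^{-n}$ with $2n=\tilde d-2$ and prove $\binom{2n}{n}4^{-n}\geq 1/(2\sqrt{n})$ by a one-line induction on the ratio of consecutive terms. This is more elementary, avoids all analytic estimates, and leaves only $d=3$ to check directly. Both proofs are valid; yours is arguably cleaner for the inequality, and the strictness of $D_{II}(d)>1/(4\sqrt d)$ comes out correctly from $n<d$.
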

\begin{proof}
  Primes of type II correspond to elements in $\SS_d$ containing a $2$-cycle and no other
  even cycles. There are $\binom{d}{2}$ $2$-cycles in $\SS_d$; fixing a $2$-cycle, we 
  need the number $\mathscr{O}(d-2)$ of elements of odd order in $\SS_{d-2}$.
  We have
  \begin{equation*}
    D_{II}(d)=\frac{1}{d!}\binom{d}{2}\mathscr{O}(d-2)
    =\frac{\mathscr{O}(d-2)}{2(d-2)!}.
  \end{equation*}
  The sequence $(\mathscr{O}(n)\mid n\in\NN)$ appears in nature in several
  guises, see~\cite{OEIS}.  The recurrence formulas that appear there and in
  Chapter IV of~\cite{Riordan} easily give the exact expression
  \begin{equation*}
    \mathscr{O}(n)=\begin{cases}
        (n-1)!!,&\text{if }n\text{ is even}\\
        (n-2)!!n,&\text{if }n\text{ is odd,}
      \end{cases}
  \end{equation*}
  which immediately provides us with the 
  exact expression for $D_{II}(d)$ in the statement.

  It remains to establish the lower bound.  Write $\tilde{d}=2c$ for some
  $c\in\ZZ$ (recall that $\tilde{d}$ is the largest even integer less than or
  equal to $d$).
  Then
  \begin{equation}\label{eq:oh}
    \mathscr{O}(d-2)=\frac{[(2c-3)!!]^2}{2(2c-2)!}=\frac{(2c-3)!}{2^{2c-3}(2c-2)[(c-2)!]^2}.
  \end{equation}
  We use the following bounds on the factorial, which can be thought of as an
  effective version of Stirling's approximation and were obtained by Robbins
  (see~\cite{Robbins} and Section~II.9 in~\cite{Feller}):
  \begin{equation*}
    \sqrt{2\pi}n^{n+\frac{1}{2}}e^{-n+\frac{1}{12n+1}}<n!<\sqrt{2\pi}n^{n+\frac{1}{2}}e^{-n+\frac{1}{12n}}.
  \end{equation*}
  Then by using the lower bound for the numerator and the upper bound for the
  denominator on the right hand side of Equation~\eqref{eq:oh},
  we obtain
  \begin{equation*}
    D_{II}(d)>\frac{2c-3}{2c-2}\,\frac{1}{2\sqrt{\pi(c-2)}}\,
    e^{\frac{1}{24(c-2)+1}-\frac{1}{6(c-2)}}
    >\left(\frac{9}{10}\right)^2\frac{\sqrt{2}}{\sqrt{\pi}}\frac{1}{\sqrt{d}},
  \end{equation*}
  where we use the elementary inequalities (valid for $c>6$):
  \begin{align*}
    e^{\frac{1}{24(c-2)+1}-\frac{1}{6(c-2)}}>e^{-\frac{5}{24c}}&>\frac{9}{10},\\
    \frac{2c-3}{2c-2}&\geq \frac{9}{10}.
  \end{align*}
  Since
  \begin{equation*}
    \left(\frac{9}{10}\right)^2\frac{\sqrt{2}}{\sqrt{\pi}}>\frac{1}{4},
  \end{equation*} 
  this gives us the desired lower bound for $d>12$, and the remaining
  cases $2<d\leq 12$ are easily checked.
\end{proof}

\begin{proposition}
  The density of primes of type III is
  \begin{equation*}
    D_{III}(d)=\sum_{d/2<\ell\leq d, \,\ell\text{ prime}} \frac{1}{\ell}.
  \end{equation*}
  If $d>2$, then
  \begin{equation*}
    D_{III}(d)>\frac{1}{d}.
  \end{equation*}
\end{proposition}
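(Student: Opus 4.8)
The plan is to mirror the proofs of Propositions~\ref{prop:type1} and~\ref{prop:type2}: translate the condition ``$p$ is of type III'' into a statement about cycle patterns, count the relevant permutations of $\SS_d$ via Theorem~\ref{thm:frobenius}, and then read off the lower bound from Bertrand's postulate. First I would note that a prime $p$ of type III has $F_p$ separable, so Lemma~\ref{lem:frobenius} applies, and (away from the density-zero set of primes dividing $\operatorname{disc}F$) the primes of type III correspond precisely to the elements $\sigma\in\SS_d$ whose cycle decomposition contains a cycle of prime length $\ell$ with $d/2<\ell\le d$. The crucial structural observation is that such a long cycle is unique when it exists, since two cycles of length $>d/2$ would involve more than $d$ letters; hence for distinct primes $\ell_1\neq\ell_2$ in $(d/2,d]$ the corresponding sets of permutations are disjoint, and I may treat the contribution of each prime $\ell$ independently.

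Next I would carry out the count for a fixed prime $\ell$ with $d/2<\ell\le d$, exactly in the style of Proposition~\ref{prop:type1}: choosing the $\ell$ letters of the cycle, arranging them cyclically, and permuting the remaining $d-\ell$ letters in an arbitrary way yields $\binom{d}{\ell}(\ell-1)!(d-\ell)!=d!/\ell$ such permutations. Summing over the admissible primes and dividing by $\#\SS_d=d!$ (using the disjointness above together with Theorem~\ref{thm:frobenius}) gives
\[
  D_{III}(d)=\sum_{\substack{d/2<\ell\le d\\ \ell\text{ prime}}}\frac{1}{d!}\cdot\frac{d!}{\ell}
  =\sum_{\substack{d/2<\ell\le d\\ \ell\text{ prime}}}\frac{1}{\ell},
\]
which is the asserted formula.

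For the lower bound I would invoke Bertrand's postulate in the form: for every integer $n\ge 1$ there is a prime $\ell$ with $n<\ell\le 2n$. Applying it with $n=\lfloor d/2\rfloor$ (which is $\ge 1$ because $d>2$) produces a prime $\ell$ with $d/2<\ell\le 2\lfloor d/2\rfloor\le d$. The one point requiring a little care is sharpening $\ell\le d$ to $\ell<d$: if $d$ is odd then $2\lfloor d/2\rfloor=d-1$, so $\ell\le d-1$; and if $d$ is even then $d>2$ forces $d$ to be composite, so $\ell\ne d$. Either way $d/2<\ell<d$, and this single term already gives $D_{III}(d)\ge 1/\ell>1/d$. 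I expect this last strictness argument, rather than the (routine) main count, to be the only mildly delicate step; it is also where the hypothesis $d>2$ is genuinely used, since $D_{III}(2)=1/2$ is not strictly larger than $1/d$.
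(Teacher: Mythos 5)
Your proof is correct and follows essentially the same route as the paper: the count $\binom{d}{\ell}(\ell-1)!(d-\ell)!=d!/\ell$ for permutations containing an $\ell$-cycle with $d/2<\ell\leq d$, summed over such primes via Theorem~\ref{thm:frobenius}, and Bertrand's postulate for the lower bound. Your two added touches --- the explicit disjointness remark justifying the sum (a cycle of length $>d/2$ is necessarily unique) and the case analysis securing the strict inequality $\ell<d$ --- are points the paper leaves implicit, and they are handled correctly.
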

\begin{proof}
  Fix a prime $\ell$ such that $d/2<\ell\leq d$.  According to
  Theorem~\ref{thm:frobenius}, we need to count the number of elements of
  $\SS_d$ that contain an $\ell$-cycle.  Choosing the $\ell$-cycle itself
  involves the $\binom{d}{\ell}$ ways of picking its constituents, which can
  then be rearranged within the cycle in $(\ell-1)!$ ways.  It remains to
  take into account the number of permutations of the remaining $d-\ell$ symbols,
  so overall we have
  \begin{equation*}
    \binom{d}{\ell}(\ell-1)!(d-\ell)!=\frac{d!}{\ell}
  \end{equation*}
  elements of $\SS_d$ containing an $\ell$-cycle, which gives the stated
  density.

  The inequality given in the statement follows from Bertrand's postulate
  (proved by Chebyshev), which says that for any integer $n>1$ there is at
  least one prime $\ell$ such that $n<\ell<2n$.
\end{proof}

We can get a much better lower bound on the density $D_{III}$ by using some
recent results of Dusart on explicit estimates for sums over primes.

\begin{theorem}[Dusart, Theorem 6.10 in~\cite{Dusart}]
  Let $B\approx 0.26149$ denote the Meissel-Mertens constant.  For
  all $x>1$ we have
  \begin{equation}\label{eq:rec_lower}
    \log\log x+B-\left(\frac{1}{10\log^2 x}+\frac{4}{15\log^3 x}\right)\leq
    \sum_{p\leq x}\frac{1}{p}.
  \end{equation}
\end{theorem}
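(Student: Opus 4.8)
The plan is to derive this estimate from an explicit form of the prime number theorem by two applications of partial summation; this is the classical route of Mertens, made fully effective by Rosser--Schoenfeld and then sharpened by Dusart. Throughout, write $\theta(x)=\sum_{p\le x}\log p$, $M(x)=\sum_{p\le x}\frac{\log p}{p}$, and let $S(x)=\sum_{p\le x}\frac1p$ be the quantity to be bounded. The first step is an explicit version of Mertens' first theorem. Partial summation gives
\[
  M(x)=\frac{\theta(x)}{x}+\int_2^{x}\frac{\theta(t)}{t^2}\,dt
       =\log x+C_1+\frac{\theta(x)-x}{x}-\int_x^\infty\frac{\theta(t)-t}{t^2}\,dt,
\]
where $C_1=1-\log 2+\int_2^\infty\frac{\theta(t)-t}{t^2}\,dt$ is a constant. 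Feeding in an explicit bound of the shape $|\theta(t)-t|\le\eta_k\, t/\log^k t$ (valid beyond some threshold, with $\eta_k$ an explicit constant), the remainder is $O(1/\log^{k-1}x)$ with an explicit implied constant, so $M(x)=\log x+C_1+r(x)$ with $r(t)$ small and explicitly controlled.

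The second step passes from $M$ to $S$ by partial summation against the slowly varying weight $1/\log t$:
\[
  S(x)=\frac{M(x)}{\log x}+\int_2^{x}\frac{M(t)}{t\log^2 t}\,dt .
\]
Substituting $M(t)=\log t+C_1+r(t)$, the term $\int_2^x\frac{dt}{t\log t}=\log\log x-\log\log 2$ produces $\log\log x$; the two contributions of the constant $C_1$ — namely $C_1/\log x$ from the boundary term and $-C_1/\log x$ coming from $C_1\int_x^\infty\frac{dt}{t\log^2 t}$ — cancel, so the leftover is governed entirely by $r$:
\[
  S(x)-\log\log x-B=\frac{r(x)}{\log x}-\int_x^\infty\frac{r(t)}{t\log^2 t}\,dt ,
\]
where $B$ collects all surviving constants. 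That $B$ is the Meissel--Mertens constant is then forced by comparison with the independently known fact that $S(x)-\log\log x\to B$. Since $|r(t)|=O(1/\log^{k-1}t)$, each term on the right is $O(1/\log^k x)$, which for $k\ge 2$ gives the asserted $O(1/\log^2 x)$ order. A careful accounting of all the constants, using Dusart's explicit bounds on $\theta(t)-t$ with $k=2$ and $k=3$ and optimizing a free parameter, is exactly what produces the precise coefficients $\tfrac1{10}$ and $\tfrac4{15}$; the finitely many small values $1<x\le x_0$ are checked directly (and are easy, since $\tfrac1{10\log^2 x}$ blows up as $x\to 1^+$).

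The hard part will be the input of the first step: an explicit, numerically strong bound on $\theta(x)-x$ (equivalently $\psi(x)-x$). Such bounds rest on a classical zero-free region for $\zeta(s)$ together with large-scale numerical verification of the Riemann hypothesis up to a suitable height, and their optimization into estimates with usably small constants $\eta_k$ is itself substantial work — this is the bulk of Dusart's paper, and it is what I would invoke rather than reprove. Everything else — the two partial summations, the cancellation of the $C_1/\log x$ terms, the identification of $B$, and the treatment of small $x$ — is routine bookkeeping. Finally, for the use made of this result in the present paper one also wants the matching upper estimate $S(x)\le\log\log x+B+(\cdots)$, proved identically with the inequalities reversed, so that $D_{III}(d)=S(d)-S(d/2)$ can be bounded below by subtracting the upper estimate at $d/2$ from the lower estimate at $d$.
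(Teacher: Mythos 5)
This statement is not proved in the paper at all: it is quoted verbatim from Dusart (Theorem~6.10 of the cited work) and used as a black box in the proof of Proposition~\ref{prop:type3}. So there is no in-paper argument to compare yours against; the only question is whether your sketch would actually establish the stated inequality, and as written it does not quite. Your outline is the correct and standard route --- indeed essentially the route Dusart follows: an explicit form of Mertens' first theorem obtained by partial summation from bounds on $\theta(t)-t$, a second partial summation against $1/\log t$, cancellation of the $C_1/\log x$ terms, and identification of the surviving constant with $B$. The two Abel summations and the bookkeeping are all right. But the content of the theorem is precisely the explicit constants $\tfrac{1}{10}$ and $\tfrac{4}{15}$ and the validity for \emph{all} $x>1$, and your argument reduces exactly that content to ``a careful accounting of all the constants, using Dusart's explicit bounds on $\theta(t)-t$,'' i.e.\ to the theorem being proved. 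That is fine as a description of where the result comes from, but it is a citation dressed as a proof; if the intent is to invoke Dusart, one should simply cite him (as the paper does), and if the intent is to prove the inequality, one must actually carry the explicit $\eta_k$'s through both integrations and verify the crossover range numerically.

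Two smaller points. First, your closing remark about the matching upper bound is slightly off relative to what the paper does: Dusart's upper bound is only available for $x\geq\numprint{10372}$, so the paper instead imports the weaker but unconditional bound~\eqref{eq:rec_upper} from Bach--Shallit rather than ``proving it identically with the inequalities reversed.'' Second, ``the finitely many small values $1<x\leq x_0$ are checked directly'' should be phrased more carefully: this is an interval, not a finite set, though since $\sum_{p\leq x}1/p$ is a step function the check does reduce to finitely many comparisons together with the monotonicity of the left-hand side between jumps (and, as you note, the left-hand side tends to $-\infty$ as $x\to 1^+$, so the very small range is trivial).
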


We will also need an upper bound on the sum of the reciprocals of primes up to
$x$, but Dusart's upper bound only holds for $x\geq\numprint{10372}$.  For our
purposes, the following weaker result is sufficient: for all $x>1$ we have
\begin{equation}\label{eq:rec_upper}
  \sum_{p\leq x}\frac{1}{p}\leq\log\log x + B +\frac{1}{\log^2 x}.
\end{equation}
(This inequality can be found in Theorem 8.8.5 of~\cite{BachShallit}.)

\begin{proposition}\label{prop:type3}
  If $d>10$, then
  \begin{equation*}
    D_{III}(d)>\frac{1}{3\log d}.
  \end{equation*}
\end{proposition}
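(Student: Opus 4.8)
The plan is to write $D_{III}(d)$ as a difference of two Mertens-type sums and sandwich it between the explicit estimates already in hand. First I would use
\[
  D_{III}(d)=\sum_{p\le d}\frac1p-\sum_{p\le d/2}\frac1p ,
\]
applying the lower bound \eqref{eq:rec_lower} with $x=d$ to the first sum and the (weaker) upper bound \eqref{eq:rec_upper} with $x=d/2$ to the second; this is legitimate because $d>10$ makes $d/2>1$. The Meissel-Mertens constant $B$ cancels, leaving
\[
  D_{III}(d)\ \ge\ \log\log d-\log\log\tfrac{d}{2}-\frac{1}{10\log^2 d}-\frac{4}{15\log^3 d}-\frac{1}{\log^2(d/2)} .
\]

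Next I would extract the main term. Since $\log d/\log(d/2)=1+\log 2/\log(d/2)$, the elementary inequality $\log(1+x)\ge x/(1+x)$ (valid for $x>0$) gives
\[
  \log\log d-\log\log\tfrac{d}{2}
  =\log\!\Bigl(1+\frac{\log 2}{\log(d/2)}\Bigr)
  \ \ge\ \frac{\log 2}{\log(d/2)+\log 2}=\frac{\log 2}{\log d}.
\]
Substituting this in, it suffices to show that
\[
  \frac{\log 2-\tfrac13}{\log d}\ \ge\ \frac{1}{10\log^2 d}+\frac{4}{15\log^3 d}+\frac{1}{\log^2(d/2)} .
\]
Writing $L=\log d$ (so that $\log(d/2)=L-\log 2$) and clearing $L^2$, this becomes a one-variable inequality of the shape $(\log 2-\tfrac13)\,L\ge \tfrac{1}{10}+\tfrac{4}{15L}+L^2/(L-\log 2)^2$; the left-hand side grows linearly while the right-hand side stays bounded, so a short elementary estimate shows the inequality holds for every $d\ge D_0$, with $D_0$ an explicit constant (it turns out to be around $80$).

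The finitely many remaining cases $10<d<D_0$ I would dispatch by direct computation: for each such $d$ one lists the primes in the interval $(d/2,d]$ --- there is always at least one, by Bertrand's postulate --- adds up their reciprocals to obtain $D_{III}(d)$ exactly, and checks numerically that this value exceeds $1/(3\log d)$.

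The step I expect to cause the most trouble is precisely the appearance of the term $1/\log^2(d/2)$: it is forced on us because Dusart's sharp upper companion to \eqref{eq:rec_lower} only holds for $x\ge\numprint{10372}$ and is hence useless in the relevant range, and it is large enough to prevent the analytic argument from reaching all the way down to $d=11$ --- which is exactly why the small values of $d$ have to be checked by hand. Beyond that, the argument is routine bookkeeping with the inequalities already quoted.
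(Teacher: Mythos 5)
Your proof is correct and follows essentially the same route as the paper: both write $D_{III}(d)$ as the difference of two Mertens sums, apply \eqref{eq:rec_lower} at $x=d$ and \eqref{eq:rec_upper} at $x=d/2$, reduce to a one-variable inequality in $\log d$ that holds beyond an explicit threshold, and check the finitely many remaining $d$ by direct computation. The only difference is cosmetic --- you bound $\log\bigl(1+\tfrac{\log 2}{\log(d/2)}\bigr)$ from below by $\tfrac{\log 2}{\log d}$ via $\log(1+x)\ge x/(1+x)$, whereas the paper uses the truncated alternating series, landing at a threshold near $94$ rather than your $80$ --- and that changes nothing of substance.
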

\begin{proof}
  We put together inequalities~\eqref{eq:rec_lower} and~\eqref{eq:rec_upper}
  to get
  \begin{equation*}
    D_{III}(d)>\log\log d-\log\log\frac{d}{2}-\frac{1}{10\log^2 d}
    -\frac{4}{15\log^3 d}-\frac{1}{\log^2\frac{d}{2}}.
  \end{equation*}
  We write
  \begin{equation*}
    \log\log d - \log\log\frac{d}{2} =\log\left(1+\frac{\log 2}{\log d-\log
    2}\right)
  \end{equation*}
  and use the inequality
  \begin{equation*}
    \log(1+x)\geq x-\frac{x^2}{2}+\frac{x^3}{3}-\frac{x^4}{4}
    \qquad\text{for all }1<x\leq 1
  \end{equation*}
  to get that for all $d\geq 4$
  \begin{align*}
    D_{III}(d)>&\frac{1}{\log d-\log 2}\left[
      \log 2
      -\left(\frac{\log^2 2}{2}+\frac{11}{10}\right)\frac{1}{\log d-\log
      2}\right.\\
      &\left.\phantom{\frac{1}{\log d-\log 2}} 
      -\left(\frac{4}{15}-\frac{\log^3 2}{3}\right)\frac{1}{(\log d-\log 2)^2}
      -\frac{\log^4 2}{4}\frac{1}{(\log d-\log 2)^3}
      \right]\\
      >&\frac{1}{\log d}\left[
      0.693-1.341\frac{1}{\log d-\log 2}
      -0.156\frac{1}{(\log d-\log 2)^2}\right.\\
      &\left.\phantom{\frac{1}{\log d}}
      -0.058\frac{1}{(\log d-\log 2)^3}
      \right].
  \end{align*}
  If $d>94$, then the expression in the brackets is bigger than $1/3$, and we
  get the desired inequality.  We check that it holds for the remaining cases
  $10<d\leq 94$ by computation.
\end{proof}

For completeness, we treat the case of primes of type IV, as defined in
Remark~\ref{rmk:type4}.

\begin{proposition}\label{prop:type4}
  Let $d>1$.  The density of primes of type IV is
  \begin{equation*}
    D_{IV}(d)=\frac{1}{d-1}.
  \end{equation*}
\end{proposition}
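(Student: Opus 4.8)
The plan is to follow the template used in Propositions~\ref{prop:type1} and~\ref{prop:type3}: first translate the defining condition on $F_p$ into a condition on cycle patterns of Frobenius elements via Theorem~\ref{thm:frobenius}, and then count the corresponding permutations of $\SS_d$ with Lemma~\ref{lem:cycletype}.

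First I would note that, by the definition in Remark~\ref{rmk:type4}, a prime $p$ is of type IV precisely when $F_p$ has factorization pattern $1^1(d-1)^1$: one irreducible factor of degree $1$ and one irreducible factor of degree $d-1$. For $d\geq 3$ these two factors automatically have different degrees, hence are distinct, and $F_p$ is therefore separable, so this is consistent with the requirements in the definition. By Theorem~\ref{thm:frobenius}, the density of type IV primes equals the fraction of elements of $\SS_d$ whose cycle decomposition consists of a single fixed point together with one $(d-1)$-cycle.

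It then remains to count these permutations. One can do this directly: there are $d$ ways to pick the fixed point and $(d-2)!$ ways to arrange the remaining $d-1$ symbols into a single cycle, for a total of $d\cdot(d-2)! = d!/(d-1)$; alternatively this is the value
\begin{equation*}
  C\bigl(1^1(d-1)^1\bigr) = \frac{d!}{1^1\cdot 1!\cdot(d-1)^1\cdot 1!} = \frac{d!}{d-1}
\end{equation*}
furnished by Lemma~\ref{lem:cycletype}. Dividing by $\#\SS_d = d!$ yields $D_{IV}(d) = 1/(d-1)$, with the small value $d=2$ (where both factors are linear) checked by inspection.

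There is no serious obstacle here: the argument is a short application of the two tools already developed. The only points requiring a moment of care are verifying that the degree-$(d-1)$ factor is genuinely distinct from the linear factor, so that $F_p$ is separable and Theorem~\ref{thm:frobenius} applies, and, if one wishes to make precise the claim in Remark~\ref{rmk:type4} that type III primes are more common, comparing $D_{IV}(d)=1/(d-1)$ with the bound $D_{III}(d)>1/(3\log d)$ for $d>10$ from Proposition~\ref{prop:type3}.
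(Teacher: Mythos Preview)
Your proposal is correct and follows essentially the same route as the paper: both proofs reduce via Theorem~\ref{thm:frobenius} to counting permutations in $\SS_d$ consisting of one fixed point and one $(d-1)$-cycle, and both count these as $d\cdot(d-2)!$ before dividing by $d!$. Your write-up is slightly more thorough in that you explicitly note separability and offer the Lemma~\ref{lem:cycletype} computation as an alternative, but the argument is the same.

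One small caveat: the case $d=2$ you set aside ``by inspection'' actually gives density $1/2$ (the relevant cycle pattern is $1^2$, the identity in $\SS_2$), not $1/(d-1)=1$; the formula and the direct count $d\cdot(d-2)!$ both overcount here because the two linear factors play symmetric roles. The paper's proof has the same blind spot, and the discrepancy is irrelevant to the intended application, but your parenthetical suggests you verified agreement where there is none.
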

\begin{proof}
  We need to count the number of $(d-1)$-cycles in $\SS_d$.  There are $d$
  choices for the letter that is fixed, and $(d-2)!$ choices for permuting the
  other letters appropriately, therefore the density of primes of type IV is
  \begin{equation*}
    \frac{d(d-2)!}{d!}=\frac{1}{d-1}.
  \end{equation*}
\end{proof}

\section{Implementation and results}
\label{sect:implementation}
Our approach is a randomized version of the algorithm
from~\cite{ConreyFarmer}, based on the results introduced in the previous
section.  We implemented this algorithm using the mathematical software Sage, 
see~\cite{Sage}.

Here is a description of the main steps used to verify Maeda's conjecture for
a fixed weight $k$; in those cases where a major step is delegated to a
component of Sage (rather than using native Sage code), we mention the
relevant component.
\begin{enumerate}
  \item\label{itm:vmbasis} Compute the Victor Miller basis $\cB$ for $S_k$ up 
    to precision
    $2(d+2)$, where $d$ is the dimension of $S_k$.  The Sage implementation of
    this basis uses~\cite{FLINT} polynomials as the internal data structure.
  \item\label{itm:hecke} Compute the matrix $M$ of the Hecke operator $T_2$ 
    with respect to the
    basis $\cB$ -- this is very efficient since the basis $\cB$ is
    echelonized.
  \item\label{itm:random} Pick a random prime $p<2^{20}$, uniformly over this
    range.  (This choice of upper
    bound gives a large enough range so that it is likely to contain primes of
    type we are looking for, but not so large that the arithmetic over $\FF_p$
    gets too expensive.)
  \item Reduce $M$ modulo $p$ and compute the characteristic polynomial
    $F_p\in \FF_p[X]$.  The characteristic polynomial is computed by
    the~\cite{Linbox} library.
  \item Is $F_p$ irreducible?  If so, $p$ is a prime of type I.  The
    irreducibility test uses~\cite{FLINT}.
  \item Factor $F_p$ over $\FF_p$ and use this factorization to decide whether
    $p$ is a prime of type II or III.  The factorization is done
    by~\cite{FLINT}.
  \item Repeat from step~(\ref{itm:random}) until we have found at
    least one prime of each type.
\end{enumerate}

According to Propositions~\ref{prop:type1}, \ref{prop:type2} and
\ref{prop:type3}, we expect to look on average at $d$ primes before we find
one of type I, at $4\sqrt{d}$ primes to find one of type II, and at $3\log d$
primes to find one of type III.

The actual performance of this algorithm (as well as a comparison to the
consecutive version of the algorithm, used in~\cite{ConreyFarmer}) is
illustrated in Figure~\ref{fig:histogram}.  Some care needs to be taken in
interpreting the graphs:
\begin{itemize}
  \item There is no difference in running times for Steps~(\ref{itm:vmbasis}) 
    and~(\ref{itm:hecke}), which 
    are common between the two algorithms.
  \item As the weight increases, the major component of the running time is
    finding a prime of type I.  Therefore, even though the randomized
    algorithm does much better at finding primes of types II and III, this
    advantage has only a minor impact on the overall running time.
  \item In the range illustrated in the graphs (i.e. weights less than
    $\numprint{2000}$), the randomized algorithm required on average one third
    of the number of primes needed by the consecutive algorithm.  However,
    some of this is counteracted by the fact that the consecutive algorithm
    works with much smaller primes, which are faster to test.
  \item Overall, for weights less than $\numprint{2000}$, the randomized 
    algorithm was about twice as fast as the consecutive one.  
\end{itemize}

It would be very interesting to understand why small primes are ill-suited for
the purposes of this multimodular algorithm.  We can only offer a heuristic
reason: we observed that the discriminant of the Hecke operator $T_2$ tends to
be highly divisible by a lot of small primes; this means that the
characteristic polynomial of $T_2$ is not squarefree at these primes, which
disqualifies them from being primes of types I, II or III.

For weight $k=\numprint{12000}$, the entire verification took about 46 hours.
The majority (93\%) of the time was spent looking for a prime of type I; this
required testing \numprint{1783} primes, and each test took about 87 seconds.
The computation of the Victor Miller basis took about 2.2 hours, and the
computation of the characteristic polynomial of $T_2$ took about 1 hour.

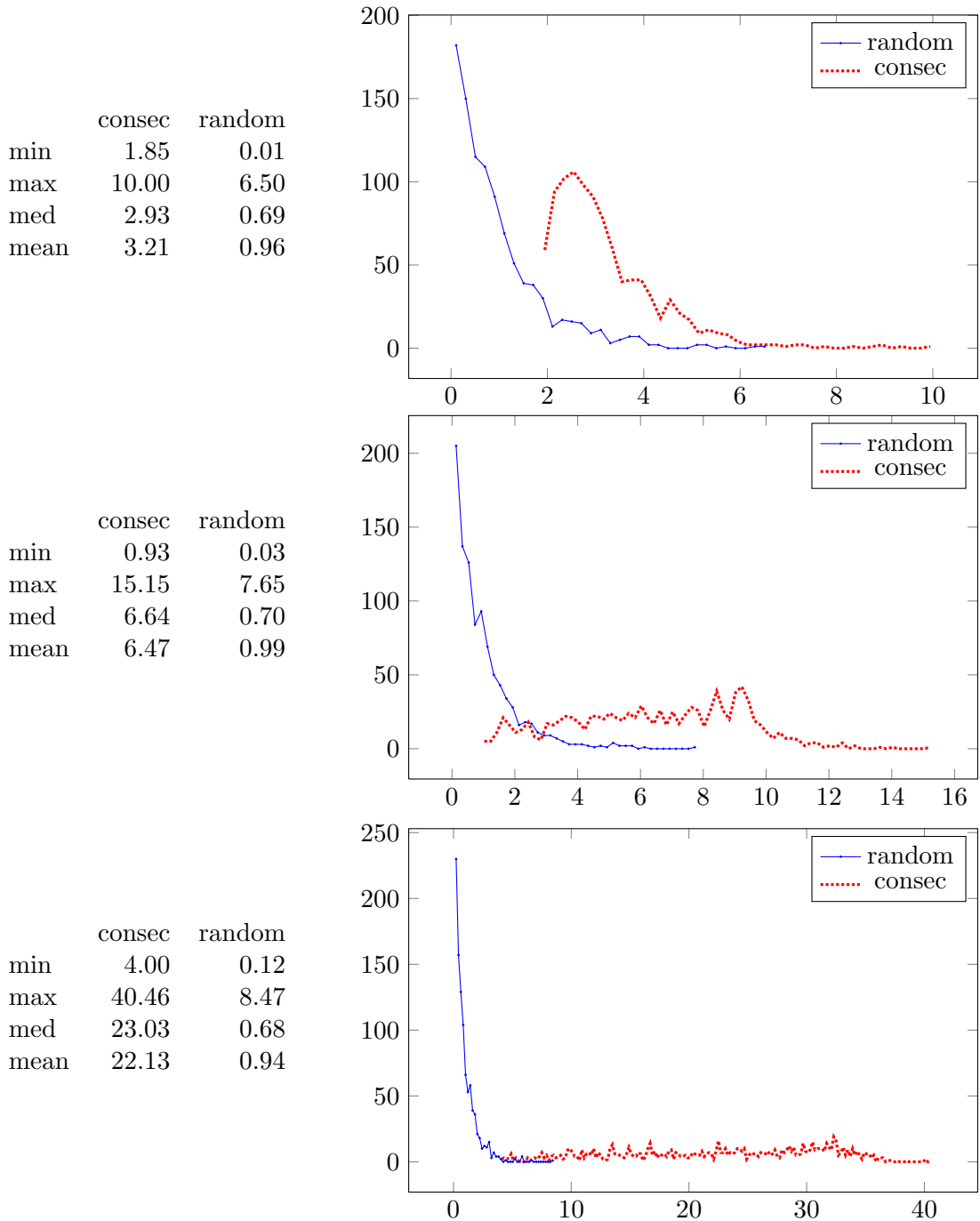
\begin{figure}[h]
  \begin{center}
  \raisebox{3.3cm}{
  \begin{tabular}{lrr}
    & consec & random\\
    min & $1.85$ & $0.01$\\
    max & $10.00$ & $6.50$\\
    med & $2.93$ & $0.69$\\
    mean & $3.21$ & $0.96$
  \end{tabular}}\qquad
  \begin{tikzpicture}
    \begin{axis}[
        height=7cm,
        width=10cm,
        mark size=0.3pt,
    ]
    \addplot file {rh1};
    \addlegendentry{random}

    \addplot[densely dotted, red, very thick] file {ch1};
    \addlegendentry{consec}
    \end{axis}
  \end{tikzpicture}

  \raisebox{3.3cm}{
  \begin{tabular}{lrr}
    & consec & random\\
    min & $0.93$ & $0.03$\\
    max & $15.15$ & $7.65$\\
    med & $6.64$ & $0.70$\\
    mean & $6.47$ & $0.99$
  \end{tabular}}\qquad
  \begin{tikzpicture}
    \begin{axis}[
        height=7cm,
        width=10cm,
        mark size=0.3pt,
    ]
    \addplot file {rh2};
    \addlegendentry{random}

    \addplot[densely dotted, red, very thick] file {ch2};
    \addlegendentry{consec}
    \end{axis}
  \end{tikzpicture}

  \raisebox{3.3cm}{
  \begin{tabular}{lrr}
    & consec & random\\
    min & $4.00$ & $0.12$\\
    max & $40.46$ & $8.47$\\
    med & $23.03$ & $0.68$\\
    mean & $22.13$ & $0.94$
  \end{tabular}}\qquad
  \begin{tikzpicture}
    \begin{axis}[
        height=7cm,
        width=10cm,
        mark size=0.3pt,
    ]
    \addplot file {rh3};
    \addlegendentry{random}

    \addplot[densely dotted, red, very thick] file {ch3};
    \addlegendentry{consec}
    \end{axis}
  \end{tikzpicture}
\end{center}
\caption{Histograms illustrating the number of primes tested before finding a
  prime of type I, II, respectively III, in weights up to $\numprint{2000}$.  
  In each graph, the numbers on the
$x$-axis represent the ratio $N/E$ of the actual number of primes tested over
the expected number of primes (coming from the densities described in
Section~\ref{sect:density}).  The $y$-value represents the number of weights
featuring (a small neighborhood of) that particular ratio $N/E$.  The blue 
continuous line corresponds to our randomized algorithm, while the red dotted 
line corresponds to the consecutive algorithm from~\cite{ConreyFarmer}.  
As an example: in the top graph, the global maximum on the continuous line is
at $(0.1, 182)$, meaning that for $182$ weights, the number of candidates for
a prime of type I tested in the randomized algorithm was about $1/10$ of the 
expected number of primes.}
\label{fig:histogram}
\end{figure}

\section{Some applications}
\label{sect:applications}

We record some immediate consequences of Theorem~\ref{thm:main}.

\subsection{Non-vanishing of $L$-functions}
A modular form is called an \emph{eigenform} if it is an eigenvector for all
the Hecke operators $T_n$.
The \emph{$L$-function} associated to an eigenform 
$f=\sum_{n=1}^\infty a_n q^n$ of weight $k$ is given by
\begin{equation*}
  L(f, s)=\sum_{n=1}^\infty \frac{a_n}{n^s}.
\end{equation*}
If $k\equiv 2\pmod{4}$, the functional equation of $L$ implies that
$L(f, k/2)=0$.  It is believed that if $k\equiv 0\pmod{4}$, then $L(f, k/2)\neq
0$.  The following result follows immediately from work of Conrey-Farmer: 

\begin{corollary}[see Theorem 1 in~\cite{ConreyFarmer}]
  Suppose $k\equiv 0\pmod{4}$ and $k\leq\bound$.  Then
  $L(f, k/2)\neq 0$ for any cuspidal eigenform $f$ of level $1$ and weight $k$.
\end{corollary}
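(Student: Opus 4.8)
The plan is to reduce the non-vanishing statement to a purely algebraic consequence of Theorem~\ref{thm:main}, using the connection between Maeda's conjecture and the results of Conrey--Farmer. Recall that when $k \equiv 0 \pmod 4$ the central $L$-value $L(f, k/2)$ need not vanish for parity reasons, and Conrey--Farmer (Theorem~1 of~\cite{ConreyFarmer}) showed that if the characteristic polynomial $F$ of $T_2$ acting on $S_k$ is irreducible over $\QQ$ (equivalently, if $S_k$ consists of a single Galois orbit of eigenforms), then $L(f, k/2) \neq 0$ for every cuspidal eigenform $f$ of weight $k$ and level $1$. So the entire content of the corollary is that the hypothesis of that theorem is satisfied in our range.

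First I would observe that the eigenforms $f \in S_k$ of level one are in bijection with the roots of $F$, the characteristic polynomial of $T_2$ on $S_k$; this is because $T_2$ acts semisimply (the Petersson inner product makes the Hecke operators self-adjoint) and, by multiplicity one for level one, the $T_2$-eigenvalue determines the eigenform. Next I would invoke Theorem~\ref{thm:main} with $n = 2$ and $k \leq \bound$: it asserts precisely that $F$ is irreducible over $\QQ$. Hence the Galois group $\Gal(\overline{\QQ}/\QQ)$ acts transitively on the eigenforms, so they form a single Galois conjugacy class. This is exactly the input required by Conrey--Farmer's theorem, which then yields $L(f, k/2) \neq 0$ for each such $f$.

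The only remaining point to check is that the range $k \equiv 0 \pmod 4$, $k \leq \bound$, is genuinely covered — that is, that $d = \dim S_k \geq 1$ so the statement is non-vacuous (for $d = 0$ there are no eigenforms and the claim is trivially true), and that Theorem~\ref{thm:main} applies uniformly for all such $k$, which it does since its hypothesis is simply $k \leq \bound$ with no congruence restriction. I would also briefly note that the case $d = 1$ (e.g.\ small $k$ such as $k = 16, 20, 28$) is handled the same way: $F$ is a linear polynomial, trivially irreducible, and Conrey--Farmer's argument applies, or alternatively such cases are classically known.

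I do not expect a serious obstacle here: the corollary is, as the paper says, ``immediate'' once Theorem~\ref{thm:main} is in hand. The only thing requiring a moment's care is making explicit the dictionary between irreducibility of $F$ and the single-Galois-orbit hypothesis in~\cite{ConreyFarmer}, and confirming that Conrey--Farmer's non-vanishing result really does take that hypothesis (rather than, say, the full strength of Maeda's conjecture including the Galois group being $\SS_d$). Since the irreducibility part of Theorem~\ref{thm:main} already suffices, the strengthening to $\Gal = \SS_d$ is not even needed for this application.
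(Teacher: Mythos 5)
Your proposal is correct and follows essentially the same route as the paper, which simply cites Theorem~1 of Conrey--Farmer and feeds it the irreducibility of the characteristic polynomial of $T_2$ supplied by Theorem~\ref{thm:main}. Your additional remarks --- that only the irreducibility (single Galois orbit) part is needed, not the full symmetric Galois group, and that the low-dimensional cases are trivially covered --- are accurate and just make explicit what the paper leaves implicit.
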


\subsection{Base change for totally real fields}
It is in the context of this work of Hida and Maeda that
Maeda's conjecture was formulated.  We content ourselves with giving a general
description of this application, and we refer the interested reader
to~\cite{HidaMaeda} for details. 

Let $f\in S_k$ be a Hecke eigenform.  For each prime $p$, there is a $p$-adic
Galois representation
\begin{equation*}
  \rho\colon\Gal\left(\overline{\QQ}/\QQ\right)
  \longto\GL_2\left(\overline{\QQ}_p\right).
\end{equation*}
There is an Artin $L$-function $L(\rho, s)$ attached to $\rho$, and the
relation between $\rho$ and $f$ can be summarized by
\begin{equation*}
  L(\rho, s) = L(f, s).
\end{equation*}

Now let $E$ be a number field.  There is a purely algebraic notion of a
cohomological eigenform $\hat{f}$ on $\GL_2(\mathbb{A}_E)$, where 
$\mathbb{A}_E$ is the ring
of adeles of $E$.  We say that $\hat{f}$ is a \emph{base change of $f$ to $E$}
if
\begin{equation*}
  L(\hat{f}, s) = L(\rho_E, s),
\end{equation*}
where $\rho_E\colon\Gal(\overline{\QQ}/E)\longto\GL_2(\overline{\QQ}_p)$ is
the restriction of $\rho$ to $E$.

The work of Hida and Maeda, together with Theorem~\ref{thm:main}, implies that
for $k\leq\bound$ and a totally real field $E$ satisfying some ramification
conditions, any eigenform $f\in S_k$ has a base change to $E$.

\subsection{Eigenforms divisible by eigenforms}
It is easy to see from the definition of a modular form that if $f_1$ and
$f_2$ are modular forms of respective weights $k_1$ and $k_2$, then the
product $f_1f_2$ is a modular form of weight $k_1+k_2$.  In other words,
modular forms of all weights put together form a graded algebra
\begin{equation*}
  M=\bigoplus_{k\in\ZZ} M_k.
\end{equation*}

A natural question is whether the product of eigenforms can be an eigenform.
This will clearly happen for small weights (for instance, when the product
lives in a one-dimensional space of cusp forms).  Since the Hecke operators
do not act on the entire algebra $M$ of modular forms (they act
differently on the graded pieces $M_k$), it seems reasonable that the
one-dimensional coincidences are the only situation in which a product of
eigenforms is an eigenform.  Such questions have been studied by several
authors, with the latest results appearing in a recent paper by
Beyerl-James-Xue~\cite{BeyerlJamesXue}.  They consider the more general question
of divisibility of an
eigenform by another eigenform, i.e. relations of the form $h=fg$ where
$f,g,h$ are modular forms and $f,h$ are eigenforms.  The relation with Maeda's 
conjecture is discussed in Section 6 of~\cite{BeyerlJamesXue}, and 
Theorem~\ref{thm:main} implies the following result.

\begin{corollary}
  \label{cor:div_eigen}
  Let $h$ be a cuspidal eigenform of weight $\leq\bound$, and let 
  $f$ be an eigenform
  (which could be cuspidal or Eisenstein).  Then $h=fg$ for some modular form
  $g\in M_k$ with $k>2$ if and only if we are in one of the cases listed in 
  Table~\ref{tbl:div_eigen}.
\end{corollary}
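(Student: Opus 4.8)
The plan is to reduce the divisibility question for eigenforms of weight $\leq\bound$ to an application of Theorem~\ref{thm:main}, following the framework set up in Section 6 of~\cite{BeyerlJamesXue}. First I would recall their key structural result: if $h = fg$ with $f$ and $h$ eigenforms and $g\in M_k$ with $k>2$, then the Hecke eigenvalue system of $h$ is forced to relate to that of $f$ in a very rigid way, and in particular $h$ cannot be a \emph{new} eigenform arising generically — the relation $h=fg$ imposes a multiplicative constraint on the $q$-expansion coefficients $a_n(h)$ that, combined with the Hecke recursions, cannot hold unless the space $S_{\operatorname{wt}(h)}$ decomposes nontrivially under the Hecke algebra. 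The crucial point is that Maeda's conjecture for $S_{\operatorname{wt}(h)}$ — specifically the irreducibility part — says the Hecke algebra acts \emph{irreducibly} on $S_{\operatorname{wt}(h)}$, i.e. there is a single Galois orbit of eigenforms. This is exactly what rules out the existence of such a factorization except in the degenerate low-weight cases where $\dim S_{\operatorname{wt}(h)}$ is too small for the obstruction to bite (e.g. one-dimensional cusp form spaces).

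Concretely, the steps would be: (1) Invoke the reduction from~\cite[\S 6]{BeyerlJamesXue} showing that a nontrivial relation $h=fg$ with $h$ a cuspidal eigenform of weight $w$ and $g$ of weight $k>2$ implies a constraint incompatible with the irreducibility over $\QQ$ of the characteristic polynomial of some Hecke operator $T_n$ on $S_w$ — in their argument this is typically phrased via the field generated by the eigenvalues of $h$ being strictly smaller than the full degree-$\dim S_w$ number field, or via a forced Eisenstein-type congruence. (2) Apply Theorem~\ref{thm:main}: since $w\leq\bound$, the characteristic polynomial of $T_2$ (and of all the $T_n$ in the stated range) acting on $S_w$ is irreducible over $\QQ$, so $S_w$ consists of a single Galois orbit of eigenforms, each with eigenvalue field of degree exactly $\dim S_w$. (3) Conclude that the only surviving possibilities are those in which $\dim S_w$ is small enough that the constraint is vacuous; enumerate these finitely many weights $w$ and, for each, determine the admissible $(f,g)$ — these are precisely the entries of Table~\ref{tbl:div_eigen}. (4) For the converse direction, verify directly that each listed case does yield a genuine relation $h=fg$, which is a finite check of $q$-expansion identities in low weight.

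The main obstacle I anticipate is not the invocation of Theorem~\ref{thm:main} — that is a black box once the reduction is in place — but rather making precise and airtight the reduction step (1): one must be careful that the argument of~\cite{BeyerlJamesXue} genuinely requires only the irreducibility half of Maeda's conjecture (which Theorem~\ref{thm:main} supplies for all $n$ in the stated range) and not some auxiliary hypothesis, and one must correctly track which Hecke operators and which weight space the irreducibility is needed for. A secondary subtlety is the boundary analysis in step (3): pinning down exactly which small weights escape the obstruction, and ensuring the enumeration in Table~\ref{tbl:div_eigen} is complete and that no case was missed because $g$ is allowed to be any modular form (not necessarily cuspidal or an eigenform) while $f$ may be Eisenstein. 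Both of these are bookkeeping rather than deep, but they are where an error would most plausibly creep in, so I would cross-check the resulting table against the computations in~\cite{BeyerlJamesXue} and against a direct Sage enumeration of small-weight spaces.
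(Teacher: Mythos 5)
Your proposal is correct and takes essentially the same route as the paper: the paper offers no independent argument for Corollary~\ref{cor:div_eigen}, but simply observes that the conditional classification in Section~6 of~\cite{BeyerlJamesXue} becomes unconditional for weights $\leq\bound$ once Theorem~\ref{thm:main} supplies the Maeda hypothesis (both the irreducibility and the full symmetric Galois group, so your worry about which half of the conjecture is needed is moot). The only caveat is that your description of the internal mechanism of the Beyerl--James--Xue reduction is speculative, but since both you and the paper treat that reduction as a black box, this does not affect the comparison.
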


\begin{table}[h]
  \begin{center}
  \begin{tabular}{r|r|l}
    weight of $f$ & weight of $g$ modulo $12$ & nature of $f$ \\ \hline
    $4$ & $0,4,6,10$ & Eisenstein \\
    $6$ & $0,4,8$ & Eisenstein \\
    $8$ & $0,6$ & Eisenstein \\
    $10$ & $0,4$ & Eisenstein \\
    $12$ & $0,2,4,6,8,10$ & cuspidal\\
    $14$ & $0$ & Eisenstein \\
    $16$ & $0,4,6,10$ & cuspidal\\
    $18$ & $0,4,8$ & cuspidal\\
    $20$ & $0,6$ & cuspidal\\
    $22$ & $0,4$ & cuspidal\\
    $26$ & $0$ & cuspidal
  \end{tabular}
\end{center}
  \caption{The only cases in which a cuspidal eigenform of weight $\leq\bound$
  can be factored into $h=fg$ with $f$ an eigenform, see
  Corollary~\ref{cor:div_eigen}.}
  \label{tbl:div_eigen}
\end{table}

\subsection{Distinguishing Hecke eigenforms}

How many initial Fourier coefficients are necessary to completely determine a
Hecke eigenform?  Theorem 1 in~\cite{Ghitza} says that $a_2$, $a_3$ and $a_4$
are sufficient, but our computational verification of Maeda's conjecture gives
a stronger result\footnote{Corollary~\ref{cor:distinguish} relies on
Theorem~\ref{thm:main}, which holds for $k\leq\bound$, but also needs another
computation from~\cite{Ghitza}, which has only been done for weights up to
$\numprint{10000}$.}:

\begin{corollary}[see Theorem 6 in~\cite{Ghitza}]
  \label{cor:distinguish}
  Let $f$ and $g$ be cuspidal eigenforms of level $1$ and (possibly distinct)
  weights
  $\leq\numprint{10000}$.  Then $a_2(f)=a_2(g)$ if and only if $f=g$.
\end{corollary}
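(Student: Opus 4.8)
The plan is to combine our verification of Maeda's conjecture (Theorem~\ref{thm:main}) with the structural criterion of~\cite{Ghitza}. Recall that the cusp forms of level $1$ and weight $k$ have a basis of normalized Hecke eigenforms, and that distinct eigenforms $f=\sum a_n(f)q^n$ and $g=\sum a_n(g)q^n$ in the \emph{same} weight are distinguished by their eigenvalues: in particular $a_2(f)=a_2(g)$ would force $f$ and $g$ to lie in the same eigenspace of $T_2$, hence to be equal, provided $T_2$ acts on $S_k$ with distinct eigenvalues. But Theorem~\ref{thm:main} tells us that for $k\leq\bound$ the characteristic polynomial of $T_2$ on $S_k$ is irreducible over $\QQ$, hence separable, so the $d=\dim S_k$ eigenvalues are pairwise distinct. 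This settles the case where $f$ and $g$ have the same weight.

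The substantive point is therefore the case of \emph{distinct} weights, and here I would invoke Theorem~6 of~\cite{Ghitza}, whose hypothesis is precisely that Maeda's conjecture (or at least the irreducibility of $\mathrm{charpoly}(T_2)$) holds for all the weights in a given range, together with an auxiliary finite computation carried out there. The idea is that an eigenform $f\in S_{k_1}$ and an eigenform $g\in S_{k_2}$ with $k_1\neq k_2$ but $a_2(f)=a_2(g)$ would produce a coincidence among the roots of the $T_2$-characteristic polynomials in two different weights; Maeda's conjecture forces each such polynomial to be irreducible, so a shared root would force the two polynomials to be equal, contradicting the fact that they have different degrees (or, in the low-weight exceptions, contradicting the explicit computation in~\cite{Ghitza}). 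Thus I would state: by Theorem~\ref{thm:main}, Maeda's conjecture holds for all weights $k\leq\bound$, in particular for all even $k\leq\numprint{10000}$; feeding this into Theorem~6 of~\cite{Ghitza}, whose remaining hypothesis is a computation done there for weights up to $\numprint{10000}$, yields the claim.

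The main obstacle is bookkeeping rather than mathematics: one must check that the range of weights for which we have verified Maeda's conjecture ($k\leq\bound$) comfortably covers the range needed by the input theorem from~\cite{Ghitza} (weights $\leq\numprint{10000}$), and that the auxiliary finite computation referenced in~\cite{Ghitza} has indeed been performed over that same range — which is why the statement of Corollary~\ref{cor:distinguish} is capped at $\numprint{10000}$ rather than at $\bound$. Beyond citing~\cite{Ghitza} correctly, there is nothing further to prove: the corollary is a direct consequence of Theorem~\ref{thm:main} and the cited result.
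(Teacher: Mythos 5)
Your proposal is correct and matches the paper's treatment: the paper gives no written proof, simply deriving the corollary by feeding Theorem~\ref{thm:main} into Theorem~6 of~\cite{Ghitza} and noting (in a footnote) that the cap at $\numprint{10000}$ comes from the auxiliary computation in~\cite{Ghitza} rather than from the Maeda verification. One small caveat on your reconstruction of the distinct-weight case: since $\dim S_{k}\approx k/12$, equal dimensions occur for weights throughout the entire range (not only in ``low-weight exceptions''), so the degree-comparison argument never suffices on its own and the auxiliary computation of~\cite{Ghitza} is genuinely needed for all such pairs --- but as you defer to the cited theorem for this, the derivation stands.
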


\printbibliography

\end{document}